\theoremstyle{plain}
\newtheorem{theorem}{Theorem}[section]
\newtheorem{proposition}[theorem]{Proposition}
\newtheorem{lemma}[theorem]{Lemma}
\theoremstyle{definition}
\newtheorem{definition}[theorem]{Definition}
\theoremstyle{remark}
\newtheorem{remark}[theorem]{Remark}
\numberwithin{equation}{section} 
\numberwithin{figure}{section}   
\newcommand{\vect}[1]{\mathbf{#1}}
\newcommand{\bu}{\vect{u}}
\newcommand{\bv}{\vect{v}}
\newcommand{\bw}{\vect{w}}
\newcommand{\bx}{\vect{x}}
\newcommand{\bbf}{\vect{f}}
\newcommand{\field}[1]{\mathbb{#1}}
\newcommand{\nN}{\field{N}}
\newcommand{\nR}{\field{R}}
\newcommand{\tu}{\Tilde{\bu}}
\newcommand{\bomega}{\boldsymbol{\omega}}
\newcommand{\nT}{\mathbb T}
\newcommand{\lp}{\left(}
\newcommand{\rp}{\right)}
\newcommand{\dt}{ \partial_t }
\newcommand{\lap}{\triangle}
\newcommand{\grad}{\nabla}
\newcommand{\bze}{\boldsymbol{\zeta}^\epsilon}
\newcommand{\bzd}{\boldsymbol{\zeta}^\delta}
\newcommand{\R}{\mathbb{R}}
\newcommand{\pd}[2]{\frac{\partial #1}{\partial #2}}
\newcommand{\abs}[1]{\left\lvert#1\right\rvert}
\newcommand{\set}[1]{\left\{#1\right\}}
\newcommand{\norm}[1]{\left\|#1\right\|}
\newcommand{\normLp}[2]{\left\|#2 \right\|_{L^{#1}}}
\newcommand{\normHs}[2]{\left\|#2 \right\|_{H^{#1}}}
\newcounter{my_counter}
\title[Calmed Navier-Stokes]
{Calmed 3D Navier-Stokes Equations: Global Well-Posedness, Energy Identities, Global Attractors, and Convergence}
\date{\today}
\author{Matthew Enlow}
\address[Matthew Enlow]{Department of Mathematics, 
                University of Nebraska--Lincoln,
        Lincoln, NE 68588-0130, USA}
\email[Matthew Enlow]{menlow2@huskers.unl.edu}
\author{Adam Larios}
\address[Adam Larios]{Department of Mathematics, 
                University of Nebraska--Lincoln,
        Lincoln, NE 68588-0130, USA}
\email[Adam Larios]{alarios@unl.edu}
\author{Jiahong Wu}
\address[Jiahong Wu]{Department of Mathematics, 
                University of Notre Dame
        Notre Dame, IN 46556, USA}
\email[Jiahong Wu]{jwu29@nd.edu}
\keywords{(Navier-Stokes equations, calmed Navier-Stokes equations, global well-posedness, energy balance, global attractor, no-slip boundary conditions.)}
\thanks{MSC 2010 Classification: 
35Q30,
76D03,
35Q35,
35B41,
76D05,
35A35,
35A01,
35K55
}
\begin{document}
\begin{abstract}
We propose a modification to the nonlinear term of the three-dimensional incompressible Navier-Stokes equations (NSE) in either advective or rotational form which ``calms'' the system in the sense that the algebraic degree of the nonlinearity is effectively reduced. 
This system, the calmed Navier-Stokes Equations (calmed NSE), utilizes a ``calming function'' in the nonlinear term to locally constrain large advective velocities. Notably, this approach avoids the direct smoothing or filtering of derivatives, thus we make no modifications to the boundary conditions.
Under suitable conditions on the calming function, we are able to prove global well-posedness of calmed NSE and show the convergence of calmed NSE solutions to NSE solutions on the time interval of existence for the latter.  In addition, we prove that the dynamical system generated by the calmed NSE in the rotational form possesses both an energy identity and a global attractor. Moreover, we show that strong solutions to the calmed equations converge to strong solutions of the NSE without assuming their existence, providing a new proof of the existence of strong solutions to the 3D Navier-Stokes equations.
\end{abstract}

\maketitle
\thispagestyle{empty}

\section{Introduction}\label{secInt}

\noindent
Is smoothing the only method to control the Navier-Stokes equations? A major obstacle in proving global well-posedness for the 3D Navier-Stokes equations in fluid dynamics is the rapid intensification of small length scales. Consequently, many approaches have focused on mitigating this growth by introducing stronger diffusion, or by mollifying or filtering the nonlinear term. These strategies essentially involve some form of smoothing. However, derivatives can also grow via another mechanism: multiplication, which can lead to the generation of smaller length scales\footnote{For example, consider $g(x) = \sin(x) + \cos(x)$. It is straightforward to show that $\|\frac{d^n}{dx^n}g\|_{L^\infty} = \sqrt{2}$ for all $n\in\nN$, but $\|\frac{d}{dx}g^{n}\|_{L^\infty} \geq n$, and hence $\frac{d}{dx}g^{n}$ grows without bound as $n\to\infty$.}. In this work, we introduce a novel modification to the incompressible Navier-Stokes equations (NSE) that tempers the effect of the algebraic multiplication without introducing a smoothing operator. Specifically, we limit the advective velocity by smoothly truncating it, a process we call ``algebraic calming'' or simply ``calming,'' since it effectively reduces the algebraic degree of the nonlinearity.  Calming was introduced by the authors of the present work in \cite{Enlow_Larios_Wu_2023_calming} in the context of the 2D Kuramoto-Sivashinsky equations (KSE).  In the present work, we propose and study two calmed versions of the 3D Navier-Stokes equations, which we call the ``calmed Navier-Stokes equations,'' (calmed NSE).

Calming has several advantages over smoothing; namely:
\begin{itemize}
    \item There is no need to modify the boundary conditions, the system is globally well-posed, in both 2D and 3D, with standard homogeneous Dirichlet (i.e., ``no-slip'') boundary conditions.
    \item The system is of the same derivative order as the Navier-Stokes equations, as there are no modifications to the derivatives introduced.
    \item The ``calming'' modification is an entirely local operation, which may be more efficient than, e.g., mollification or filtering in computational settings, especially in the setting of parallel processing.  (There is also no auxilliary equation to handle, such as in the case of the $k-\epsilon$ or $k-\omega$ models.)
    \item Solutions of a rotational version of the calmed model satisfy exactly the same energy identity as that of strong solutions to the Navier-Stokes equations.
\end{itemize}

Specifically, in the present work, we prove that the calmed NSE are globally well-posed in 3D with no-slip (i.e., physical) boundary conditions, and that their solutions converge, as the calming parameter $\epsilon\rightarrow0^+$, to strong solutions of the Navier-Stokes equations on the time interval of existence and uniqueness of the latter.  Moreover, we propose a version of this calming modification in the context of the so-called rotational form of the NSE, where the calming is applied only to the rotational form of the nonlinearity. For this system, which we call the ``calmed rotational Navier-Stokes equations'' (calmed rNSE), we prove that under an additional assumption on the calming function, the resulting system satisfies exactly the same energy equality as for strong solutions to the NSE, in addition to enjoying the aforementioned properties of the calmed NSE.  We then use this energy equality to prove that the calmed rNSE has a compact global attractor.
We also prove that solutions of the calmed system converge to strong solutions of the original Navier-Stokes system (at least, before the potential blow-up time of the later).  In addition to this, we show that there is no need to assume the existence of strong solutions to the Navier-Stokes equations \textit{a priori}.  In particular, via calming, we provide a new independent proof of the existence of strong solutions to the 3D Navier-Stokes equations.

The three-dimensional (3D) incompressible constant-density Navier-Stokes equations (NSE) are given by
\begin{subequations}\label{NSE}
\begin{empheq}[left=\empheqlbrace]{align}
\label{NSE_mo}
\dt\bu + (\bu\cdot\nabla)\bu + \grad p  &= \nu \triangle\bu + \mathbf{f}&&\text{ in }\Omega\times(0,T),\\
\nabla\cdot\bu &= 0&&\text{ in }\Omega\times(0,T),\label{NSE_div}
\\
\bu\big|_{\partial\Omega} &= \mathbf{0} &&\text{ on }\partial\Omega\times(0,T)
\\
\bu(\bx,0)&=\bu_0(\bx)&&\text{ in }\Omega, 
\end{empheq}
\end{subequations}
Here, $\bu:\Omega\times[0,T] \to \R^3$ is the fluid velocity,  $p:\Omega\times[0,T] \to \R$ is the (kinematic) pressure, and $\bbf:\Omega\times[0,T] \to \R^3$ is a body force. The domain $\Omega\subset\nR^3$ is a bounded, open, connected set with $C^2$ boundary.  

Note that, using the vector identity 
\begin{align}\label{vec_identity}
 (\bu\cdot\nabla)\bu = (\nabla\times\bu)\times\bu + \tfrac12\nabla|\bu|^2,
\end{align}
one may formally rewrite \eqref{NSE} in the following equivalent rotational form (rNSE),
\begin{subequations}\label{NSE_vor}
\begin{empheq}[left=\empheqlbrace]{align}
\dt\bu + \bomega\times\bu  +\nabla \pi &= \nu \triangle\bu + \mathbf{f} &&\text{ in }\Omega\times(0,T),\\
\nabla\cdot\bu &= 0,\label{NSE_vor_div} &&\text{ in }\Omega\times(0,T),\\ 
\bu\big|_{\partial\Omega} &= \mathbf{0} &&\text{ on }\partial\Omega\times(0,T), \\
\bu(\bx,0)&=\bu_0(\bx)&&\text{ in }\Omega, 
\end{empheq}
\end{subequations}
where we have denoted the vorticity by $\bomega :=\nabla\times\bu$ and the Bernoulli pressure (or ``dynamic pressure'') as $\pi:=p + \tfrac12|\bu|^2$. The term $\bomega\times \bu$ is sometimes called the Lamb vector.

Using the techniques introduced in the recent paper \cite{Enlow_Larios_Wu_2023_calming}, we use a bounded smooth truncation function --- that we call a ``calming function'' when used in this context --- that approximates the identity as the ``calming parameter'' $\epsilon\rightarrow0^+$. We propose a calming-function approach to the 3D NSE.  To make things concrete, we consider several forms of calming functions (the first three of which were also considered in \cite{Enlow_Larios_Wu_2023_calming}); namely,
\begin{align} \label{zeta_choices}
\bze(\bx) = 
\begin{cases}
  \bze_1(\bx) := \frac{\bx}{1+\epsilon|\bx|}, &\text{ or}\\
  \bze_2(\bx) := \frac{\bx}{1+\epsilon^2|\bx|^2}, &\text{ or}\\
  \bze_3(\bx) := \frac{1}{\epsilon}\arctan(\epsilon\bx), &\text{ or}\\
  \bze_4(\bx) := q^\epsilon(\abs{\bx}) \frac{\bx}{\abs{\bx}} , &
\end{cases}
\end{align}
where the arctangent in $\bze_3$ acts component-wise; 
\[\arctan\lp (z_1,z_2,z_3)^T \rp= ( \arctan(z_1), \arctan(z_2),\arctan(z_3))^T,\] 
and for $\bze_4$, we define $\bze_4(\boldsymbol{0})=\boldsymbol{0}$, and

\begin{align}\label{calm4}
q^\epsilon(r) = 
    \begin{cases}
        r, \quad  
        & 0 \leq  r < \frac{1}{\epsilon}, \\ 
        -\frac{\epsilon}{2}\lp r - \frac{2}{\epsilon} \rp^2 + 
        \frac{3}{2\epsilon}, \quad  
        & \frac{1}{\epsilon} \leq r < \frac{2}{\epsilon}, \\ 
        \frac{3}{2\epsilon} , \quad 
        & r \geq  \frac{2}{\epsilon}. 
    \end{cases}
\end{align}

Note that $\bze(\bx)\rightarrow\bx$ for all $\bx\in\Omega$ (i.e.,  pointwise) as $\epsilon\rightarrow0^+$, and $\bze_i\in C^1$ for $i=1,\ldots,4$. 
We also require that, for example, $\bze$ be bounded for $\epsilon > 0$ fixed. We describe in detail the conditions we assume for $\bze$ in Definition \ref{zeta_def}.

The idea is that, when used in the nonlinear term, a calming function allows for control over the $L^\infty$ norm that is otherwise unavailable. This permits a proof of global well-posedness of the modified system without the need to modify boundary conditions or add higher-order derivatives (as in, e.g., modifications based on adding higher-order viscosity \cite{Layton_Rebholz_2013_Voigt}). Moreover, we can prove that, at least before the potential blow-up time of the original PDE, solutions to the modified PDE converge to solutions to the original PDE as $\epsilon\rightarrow 0$.  At least, this is the program that was carried out in \cite{Enlow_Larios_Wu_2023_calming} in the context of the 2D Kuramoto-Sivashsinsky equation.  In the present work, we extend this program to the 3D Navier-Stokes equations.

In particular, we propose two modifications of the Navier-Stokes system.  The first is based on the form \eqref{NSE}. Continuing the same approach we employed in \cite{Enlow_Larios_Wu_2023_calming}, we introduce the following system that we call the calmed Navier-Stokes equations (calmed NSE).
\begin{subequations}\label{cNSE}
\begin{empheq}[left=\empheqlbrace]{alignat=3}
\label{cNSE_mo}
\dt\bu + (\bze(\bu)\cdot\nabla)\bu + \grad p  &= \nu \triangle\bu + \mathbf{f}&&\text{ in }\Omega\times(0,T),\\
\nabla\cdot\bu &= 0&&\text{ in }\Omega\times(0,T),\label{cNSE_div}
\\
\bu\big|_{\partial\Omega} &= \mathbf{0} &&\text{ on }\partial\Omega\times(0,T)
\\
\bu(\bx,0)&=\bu_0(\bx)&&\text{ in }\Omega, 
\end{empheq}
\end{subequations}

One can see \eqref{cNSE} as a modification of \eqref{NSE} in the spirit of Leray (see, e.g., \cite{Leray_1934essai, Yamazaki_2012_Leray_alpha, Farhat_Lunasin_Titi_2018_Leray_AOT, Cheskidov_Holm_Olson_Titi_2005, Cao_Titi_2009, Ilyin_Lunasin_Titi_2006, Hecht_Holm_Petersen_Wingate_2008_analysis, Cao_Holm_Titi_2005_Jturb, Chen_Foias_Holm_Olson_Titi_Wynne_1999} and many others), except that our modification does not mollify the nonlinearity but is instead a local truncation of the advective velocity. 

While we show in the present work that the calming modification of \eqref{cNSE} allows for a proof of global well-posedness and other desirable properties, it is clear that such a modification would have a different energy balance than that of Navier-Stokes, as the nonlinear term does not vanish in standard energy calculations.
Therefore, we also consider a related modification of the rotational form \eqref{NSE_vor} which locally limits the strength of the rotational term.  Namely, we propose the following system, which we call the calmed rotational Navier-Stokes equations (calmed rNSE).
\begin{subequations}\label{cNSE_vor}
\begin{empheq}[left=\empheqlbrace]{alignat=3}
\label{cNSE_vor_mo}
\dt\bu  + (\nabla\times\bu)\times\bze(\bu) + \grad \pi  &= \nu \triangle\bu + \mathbf{f}&&\text{ in }\Omega\times(0,T),\\
\nabla\cdot\bu &= 0&&\text{ in }\Omega\times(0,T),\label{cNSE_vor_div}
\\
\bu\big|_{\partial\Omega} &= \mathbf{0} &&\text{ on }\partial\Omega\times(0,T)
\\
\bu(\bx,0)&=\bu_0(\bx)&&\text{ in }\Omega. 
\end{empheq}
\end{subequations}
Due to the presence of the calming function, one cannot rewrite calmed NSE as calmed rNSE using \eqref{vec_identity} as we do for NSE and rNSE. Thus, while they are both modifications of the Navier-Stokes equations which are similar, we treat them as different systems. 
However, system \eqref{cNSE_vor} is an interesting object to study in its own right. Thanks to the well-known geometric identity for the cross product,
\begin{align}\label{cross_prod_ortho}
(\mathbf{A}\times\mathbf{B})\cdot\mathbf{B}=0,
\end{align}
one discovers exceptional features of System \eqref{cNSE_vor} when $\bze$ is suitably chosen.
Namely, when $\bze(\bx)$ can be expressed as a scalar multiple of $\bx$ pointwise we deduce that \eqref{cNSE_vor} possesses both an energy identity (Theorem \ref{energy_eq}) and its dynamical system has a global attractor (Theorem \ref{thm_cNSE_vort_Glatt}).

\begin{remark}
Applying a bounded truncation operator to the nonlinear term in $3$D Navier Stokes was also considered by Yoshida and Giga \cite{Yoshida_1984nonlinear} and by the authors of \cite{Caraballo_Real_Kloeden_2006_ANS} in the study of the globally modified Navier-Stokes Equations (GMNSE) (see also,
\cite{Caraballo_Kloeden_Real_2008_DCDS,
Chai_Duan_2019_attractor_GMNSE_fractional,
Deugou_Medjo_2018_CPAA,
Kloeden_Langa_Real_2007_CPAA,
Kloeden_Rubio_Real_2009_CPAA,
Romito_2009_ANS,
Rubio_Duran_Real_2011_DCDS,
Zhang_2009_tamed_NSE,
Zhao_Yang_2017_GMNSE_pullback}).
In those works, the following system was studied.
\begin{subequations}
\begin{empheq}[left=\empheqlbrace]{alignat=3}
\notag
\dt{\bu}+\min\set{1,N\|\nabla\bu\|_{L^2}^{-1}}(\bu\cdot\nabla)\bu +\nabla p &= \nu\triangle\bu + \bbf &&\text{ in }\Omega\times(0,T),\\ \notag
\nabla\cdot\bu &= 0&&\text{ in }\Omega\times(0,T),
\\ \notag
\bu\big|_{\partial\Omega} &= \mathbf{0} &&\text{ on }\partial\Omega\times(0,T)
\\ \notag
\bu(\bx,0)&=\bu_0(\bx)&&\text{ in }\Omega, 
\end{empheq}
\end{subequations}
For GMNSE, solutions converge to a solution of $3$D Navier-Stokes as parameter $N$ tends to infinity. 
This system is similar to calmed NSE \eqref{cNSE} in that it bounds the nonlinear term as the velocity $\bu$ gets large in a certain sense. However, our modification has several advantages over GMNSE. Namely, that the calming functions in the present work are defined pointwise and only bound the solution $\bu$ in regions where $|\bu(\bx,t)|$ is greater than approximately $\epsilon^{-1}$, whereas the modification in GMNSE affects the solution globally. Also, whenever $\normLp{2}{\grad\bu} \to \infty$, the nonlinearity in GMNSE vanishes entirely, but for calmed NSE this would only cause the large values of $\abs{\bu(\bx,t)}$ to be truncated locally.
Moreover, our calming parameter depends on $\bu$ while the GM function depends on $\grad\bu$, hence the manner in which we control the nonlinearity is different.
In a future work, we will examine differences between these two systems computationally.
\end{remark}

\subsection{Main results}\label{subsec_Main_Results}
The proofs of existence, uniqueness, convergence, etc. are essentially identical for both equations \eqref{cNSE} and \eqref{cNSE_vor}.  (The only phenomenological difference examined in this paper is the rotational form \eqref{cNSE_vor} has an energy identity, but for \eqref{cNSE}, this is unknown.)  Therefore, we adopt a unified abstract notation which allows us to handle both equations simultaneously.  
For either \eqref{cNSE} or \eqref{cNSE_vor}, the weak formulation can be written as follows: Given $\bu_0 \in L^2(0,T; H)$ and $\bbf \in L^2(0,T; V')$, find $\bu \in L^2(0,T; V)$ which satisfies 

\begin{subequations}\label{cNSE_weak}
\begin{align}
\label{cNSE_weak_eqn} 
\left\langle \dt\bu, \bv \right\rangle + 
\left\langle \nu A\bu, \bv \right\rangle + 
\left\langle B(\bze(\bu), \bu), \bv \right\rangle &= 
\left\langle \bbf, \bv \right \rangle \quad \text{ for all } \bv\in V, \\
\label{cNSE_weak_IC}    \bu(\bx, 0) &= \bu_0(\bx),
\end{align}
\end{subequations}
where the Stokes operator $A$ is defined by \eqref{Stokes} and the nonlinear term $B(\cdot, \cdot)$ is defined in \eqref{nonlinear_bi} in either advective or rotational form.

We note that the uniqueness of weak solutions is an open problem, similar to the situation regarding $3$D Navier-Stokes. However, unlike the $3$D Navier-Stokes case, we are able to prove the global existence of strong solutions.

First, we give precise conditions on the types of calming functions we allow.

\begin{definition}\label{zeta_def}
    We say $\bze: \R^3 \to \R^3$ is a \emph{calming function} if the following three conditions hold:
    \begin{enumerate}
    \item \label{zeta_cond_Lip} $\bze$ is Lipschitz continuous with Lipschitz constant $1$.
    \item \label{zeta_cond_bdd} For $\epsilon > 0$ fixed, $\bze$ is bounded.
    \item \label{zeta_cond_conv} There exists $C > 0$, $\alpha >0$ and $\beta \geq 1$ such that for any $\bx \in \R^3$,
    \begin{align}\label{zeta_pwise_conv}
        \abs{\bze(\bx) - \bx} \leq C \epsilon^\alpha \abs{\bx}^\beta
    \end{align}
    \end{enumerate}
(with additional constraints stated in the theorem on $\alpha$ and $\beta $). 
For the energy equality, we also require:
   \begin{enumerate}[resume] 
    \item \label{zeta_cond_parll} 
     For any $\epsilon >0$ and for each $\bx\in \R^3$ there exists $\lambda^\epsilon(\bx) \in \R$ such that $\bze(\bx) = \lambda^\epsilon(\bx) \bx$.
     That is, $\bze(\bx)$ is parallel to $\bx$.
    \end{enumerate}
\end{definition}

\begin{remark}
    The lower bound on $\beta$ is necessary to satisfy condition \ref{zeta_cond_bdd} and inequality \eqref{zeta_pwise_conv} of $\bze$. Using the triangle inequality and \eqref{zeta_pwise_conv}, we may write 
    \begin{align}\notag
        \abs{\bx} \leq 
        C\epsilon^\alpha \abs{\bx}^\beta + 
        \normLp{\infty}{\bze},
    \end{align}
    which, when $\abs{\bx}$ is sufficiently large, fails to be valid for $\beta < 1$. 
\end{remark}
    
We indicate in the next proposition the extent to which our examples of a calming function (stated in \eqref{zeta_choices}) satisfy the conditions of Definition \ref{zeta_def}.

\begin{proposition}\label{zeta_cond_prop}
    Consider $\bze_i$ as described in \eqref{zeta_choices}. 
    
    For $i = 1,2,4$,
    $\bze_i$ satisfies Conditions \ref{zeta_cond_Lip}-\ref{zeta_cond_parll} of Definition \ref{zeta_def}. For $i=3$, $\bze_i$ satisfies Conditions \ref{zeta_cond_Lip}, \ref{zeta_cond_bdd}, and \ref{zeta_cond_conv} of Definition \ref{zeta_def}. In particular, the following explicit bounds hold for $\epsilon>0$.
    \begin{enumerate}
        \item For $\bze_1$, 
        \[
        \normLp{\infty}{ \bze_1} = \frac{1}{\epsilon}
        \text{ and } \abs{\bze_1(\bx) - \bx} \leq \epsilon \abs{\bx}^2.
        \]
        \item For $\bze_2$, 
        \[
        \normLp{\infty}{ \bze_2} = \frac{1}{2\epsilon}
        \text{ and } \abs{\bze_2(\bx) - \bx} \leq \epsilon^2 \abs{\bx}^3.
        \]
        \item For $\bze_3$, 
        \[
        \normLp{\infty}{ \bze_3} = \frac{\sqrt{3}\pi}{2\epsilon}
        \text{ and } \abs{\bze_3(\bx) - \bx} \leq \epsilon^2 \abs{\bx}^3.
        \]
        \item For $\bze_4$, 
        \[
        \normLp{\infty}{ \bze_4} = \frac{3}{2\epsilon} 
        \text{ and } \abs{\bze_4(\bx) - \bx} \leq \epsilon \abs{\bx}^2.
        \]
    \end{enumerate}
\end{proposition}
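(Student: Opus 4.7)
My plan is to verify conditions \ref{zeta_cond_Lip}--\ref{zeta_cond_parll} of Definition \ref{zeta_def} directly, case by case, for each of the four calming functions in \eqref{zeta_choices}, exploiting their common structure where possible. The parallel condition \ref{zeta_cond_parll} is immediate by inspection for $\bze_1,\bze_2,\bze_4$: each is of the form $\bze_i(\bx) = \lambda_i^\epsilon(|\bx|)\,\bx$, with $\lambda_1^\epsilon(r) = (1+\epsilon r)^{-1}$, $\lambda_2^\epsilon(r) = (1+\epsilon^2 r^2)^{-1}$, and $\lambda_4^\epsilon(r) = q^\epsilon(r)/r$ (extended continuously at the origin). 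This property fails for $\bze_3$ because componentwise $\arctan$ does not in general preserve the direction of $\bx$ (take $\bx = (1,2,0)^T$), which is precisely why the proposition excludes this case from condition \ref{zeta_cond_parll}.

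For the $L^\infty$ bounds in condition \ref{zeta_cond_bdd} I would use single-variable calculus: $r\mapsto r/(1+\epsilon r)$ is increasing with supremum $1/\epsilon$ for $\bze_1$; $r\mapsto r/(1+\epsilon^2 r^2)$ is maximized at $r=1/\epsilon$ with value $1/(2\epsilon)$ for $\bze_2$; each component of $\bze_3$ is bounded by $\pi/(2\epsilon)$, yielding $|\bze_3(\bx)|\leq \sqrt{3}\pi/(2\epsilon)$; and $|\bze_4(\bx)| = q^\epsilon(|\bx|) \leq 3/(2\epsilon)$ directly from \eqref{calm4}. The convergence bounds in condition \ref{zeta_cond_conv} then follow from short algebra. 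For $\bze_1$: $|\bze_1(\bx) - \bx| = |\bx|\cdot \epsilon|\bx|/(1+\epsilon|\bx|) \leq \epsilon|\bx|^2$; for $\bze_2$: analogously $|\bze_2(\bx) - \bx| \leq \epsilon^2 |\bx|^3$. For $\bze_3$ I would first prove the scalar bound $|\arctan(y) - y| \leq |y|^3/3$ by observing $|(\arctan - \mathrm{id})'(y)| = y^2/(1+y^2) \leq y^2$ and integrating from $0$, then combine the componentwise bounds with the $\ell^p$-inequality $\sum_{i=1}^3 |x_i|^6 \leq (\sum_{i=1}^3 |x_i|^2)^3$ to obtain $|\bze_3(\bx) - \bx| \leq (\epsilon^2/3)|\bx|^3$. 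For $\bze_4$, the difference vanishes on $\{|\bx|<1/\epsilon\}$, and on $\{|\bx|\geq 1/\epsilon\}$ one checks $0\leq |\bx|- q^\epsilon(|\bx|) \leq |\bx| \leq \epsilon|\bx|^2$.

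The main technical step is condition \ref{zeta_cond_Lip} with Lipschitz constant \emph{exactly} $1$. For the three radial maps $\bze_i(\bx) = \lambda_i^\epsilon(|\bx|)\bx$ ($i\in\{1,2,4\}$), the Jacobian for $\bx\neq \mathbf{0}$ is $\lambda_i^\epsilon(|\bx|)\,I + (\lambda_i^\epsilon)'(|\bx|)\,|\bx|^{-1}\bx\bx^T$, whose eigenvalues are $\lambda_i^\epsilon(|\bx|)$ on $\bx^\perp$ (multiplicity $2$) and $(r\lambda_i^\epsilon(r))'|_{r=|\bx|}$ on $\mathrm{span}(\bx)$. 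My plan is to verify both eigenvalues lie in $[-1,1]$: for $\bze_1$ they reduce to $(1+\epsilon r)^{-1}$ and $(1+\epsilon r)^{-2}$; for $\bze_2$ to $(1+\epsilon^2 r^2)^{-1}$ and $(1-\epsilon^2 r^2)/(1+\epsilon^2 r^2)^2$, the latter bounded by $1$ since $|1-y|\leq (1+y)^2$ for $y\geq 0$; and for $\bze_4$ to $\lambda_4^\epsilon(r)$ and $(q^\epsilon)'(r)$, both of which I would check piecewise from \eqref{calm4} to lie in $[0,1]$. For $\bze_4$ one must also verify $C^1$-matching of $q^\epsilon$ at the junctions $r=1/\epsilon, 2/\epsilon$ (evident from \eqref{calm4}) and check behavior at the origin so that the global Lipschitz bound from local bounded Jacobian applies. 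For $\bze_3$, whose Jacobian is diagonal with entries $(1+\epsilon^2 x_i^2)^{-1} \in (0,1]$, the Lipschitz bound follows componentwise from the mean value theorem. The main obstacle, modulo these computations, is keeping the eigenvalue bounds sharp enough to give Lipschitz constant exactly $1$ rather than merely bounded; this is why the particular denominators $(1+\epsilon r)$, $(1+\epsilon^2 r^2)$, and the quadratic matching in $q^\epsilon$ are chosen as they are.
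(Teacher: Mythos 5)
Your proposal is correct, and for the parts the paper actually writes out it follows essentially the same route: for $\bze_3$ you bound the derivative of $\tfrac{1}{\epsilon}\arctan(\epsilon y)-y$ by $\epsilon^2 y^2$, integrate to get the cubic bound, and sum over components, exactly as in the paper's appendix; the $L^\infty$ computations are the same elementary calculus. The differences are that you verify $q^\epsilon(r)\le r$ directly and use $0\le r-q^\epsilon(r)\le r\le \epsilon r^2$ on $\{r\ge 1/\epsilon\}$, whereas the paper integrates the derivative bound $\abs{\tfrac{d}{dr}(q^\epsilon(r)-r)}\le 2\epsilon r$ — both work — and, more substantively, you supply complete arguments for Conditions \ref{zeta_cond_Lip} and \ref{zeta_cond_parll}, which the paper dispatches as ``direct computation.'' Your Jacobian-eigenvalue argument for the Lipschitz constant (eigenvalues $\lambda_i^\epsilon(r)$ and $(r\lambda_i^\epsilon(r))'$, each shown to lie in $[-1,1]$) is clean and is the right way to get the constant exactly $1$; just make sure, as you note, to justify passing from the pointwise Jacobian bound on $\R^3\setminus\{\mathbf{0}\}$ to the global Lipschitz estimate, e.g.\ by integrating along segments and using continuity at the origin. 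Your counterexample $\bx=(1,2,0)^T$ for the failure of Condition \ref{zeta_cond_parll} for $\bze_3$ is also a worthwhile addition that the paper leaves implicit.
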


Next, we begin by defining what we mean by weak and strong solutions.

\begin{definition}[Weak solution]\label{cNSE_vel_weak_soln}
Let $T>0$, $\bu_0 \in H$ and let $\bbf \in L^2(0,T;V')$. We say that $\bu$ is a \textit{weak solution} to calmed NSE \eqref{cNSE} or calmed rNSE \eqref{cNSE_vor} on the interval $[0,T]$ if $\bu$ satisfies equation \eqref{cNSE_weak_eqn} for all $\bv\in V$ in the sense of $L^2((0,T))$ with $\bu\in C([0,T];H)$ and $\dt\bu \in L^2(0,T;V')$. Furthermore, we require \eqref{cNSE_weak_IC} to be satisfied in the sense of $C([0,T];H)$.
\end{definition}

\begin{definition}[Strong solution] \label{cNSE_vel_strong_soln}
    Let $T>0$, $\bu_0\in V$, and let $\bbf \in L^2(0,T;H)$. We say that $\bu$ is a \textit{strong solution} to calmed NSE \eqref{cNSE} or calmed rNSE \eqref{cNSE_vor} on the interval $[0,T]$ if $\bu$ is a weak solution and also 
    $\bu \in C([0,T]; V) \cap L^2(0,T; H^2\cap V)$ with time derivative $\dt \bu \in L^2(0,T; H)$ and initial data satisfied in the sense of $C([0,T]; V)$.
\end{definition}

We now state our results on the global well-posedness of solutions to calmed Navier-Stokes and calmed rotational Navier-Stokes. 

\begin{theorem}[Global existence of weak solutions to calmed systems]\label{thm_cNSE_vel_exi}
 Let $\bu_0 \in H$, $T>0$, and let $\bbf \in L^2(0,T;V')$ be given. Suppose, for $\epsilon > 0$, $\bze$ is a calming function which satisfies conditions \ref{zeta_cond_Lip}, \ref{zeta_cond_bdd}, and \ref{zeta_cond_conv} of Definition \ref{zeta_def}. Then weak solutions to calmed NSE or calmed rNSE \eqref{cNSE_weak} exist on $[0,T]$.
\end{theorem}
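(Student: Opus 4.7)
The plan is to construct weak solutions via a standard Faedo-Galerkin scheme, leveraging the boundedness and Lipschitz continuity of $\bze$ to obtain uniform estimates and to identify the nonlinear limit. I would fix an orthonormal basis $\{\bw_k\}_{k=1}^\infty$ of $H$ consisting of eigenfunctions of the Stokes operator $A$, let $P_m$ denote orthogonal projection onto $H_m := \SPAN{\bw_1,\ldots,\bw_m}$, and seek approximations $\bu_m(t) = \sum_{k=1}^m c_k^{(m)}(t)\bw_k$ solving
\begin{align*}
\ip{\dt\bu_m}{\bw_k} + \nu\ip{A\bu_m}{\bw_k} + \ip{B(\bze(\bu_m),\bu_m)}{\bw_k} = \ip{\bbf}{\bw_k},\qquad k=1,\ldots,m,
\end{align*}
with $\bu_m(0) = P_m\bu_0$. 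Because $\bze$ is Lipschitz (Condition \ref{zeta_cond_Lip}) and all norms on $H_m$ are equivalent, the vector field on the right-hand side of the resulting ODE for the coefficients $(c_k^{(m)})$ is locally Lipschitz, so the Picard-Lindelöf theorem yields a unique local-in-time Galerkin solution.

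The boundedness of $\bze$ (Condition \ref{zeta_cond_bdd}) is then the essential ingredient for the a priori estimates. Testing the Galerkin system against $\bu_m$ itself and using the pointwise bound $\abs{\bze(\bu_m)(\bx,t)} \leq M_\epsilon := \normLp{\infty}{\bze}$, the nonlinear contribution is controlled (in either the advective or the rotational form) by
\begin{align*}
\abs{\ip{B(\bze(\bu_m),\bu_m)}{\bu_m}} \leq M_\epsilon \normLp{2}{\nabla\bu_m}\normLp{2}{\bu_m}.
\end{align*}
Absorbing the gradient factor via Young's inequality and invoking Grönwall's lemma yields uniform-in-$m$ bounds for $\bu_m$ in $L^\infty(0,T;H) \cap L^2(0,T;V)$, which extends the local Galerkin solutions to the whole interval $[0,T]$. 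A parallel estimate using $\|B(\bze(\bu_m),\bu_m)\|_{V'} \leq C M_\epsilon \normLp{2}{\nabla\bu_m}$ together with the given bound on $\bbf$ then produces a uniform bound for $\dt\bu_m$ in $L^2(0,T;V')$.

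Armed with these bounds, the Aubin-Lions lemma extracts a (not relabeled) subsequence with $\bu_m \rightharpoonup \bu$ weakly in $L^2(0,T;V)$, weak-$*$ in $L^\infty(0,T;H)$, and strongly in $L^2(0,T;H)$. The step I expect to be the main obstacle is the identification of the weak limit of the nonlinear term, which is nonlinear in both $\bu_m$ and $\nabla\bu_m$. Here the Lipschitz condition \ref{zeta_cond_Lip} is the key: the pointwise estimate $\abs{\bze(\bu_m) - \bze(\bu)} \leq \abs{\bu_m - \bu}$ upgrades the strong convergence of $\bu_m$ in $L^2(0,T;L^2(\Omega))$ to strong convergence of $\bze(\bu_m)$ in the same space, and interpolation against the uniform bound $\normLp{\infty}{\bze(\bu_m)} \leq M_\epsilon$ promotes this to strong convergence in $L^p(0,T;L^p(\Omega))$ for every $p<\infty$. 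Tested against a smooth $\bv \in V \cap L^\infty$, the nonlinear integrand is then (schematically) a product of a strongly-$L^2$-convergent factor $\bze(\bu_m)\cdot\bv$ and a weakly-$L^2$-convergent factor $\nabla\bu_m$, whose pairing passes to the limit; the rotational form is handled identically after expanding the triple product into components. A standard density argument extends the limiting equation to all $\bv\in V$, and the continuity $\bu\in C([0,T];H)$ together with attainment of the initial datum follows from the Lions-Magenes lemma applied to $\bu\in L^2(0,T;V)$ with $\dt\bu\in L^2(0,T;V')$.
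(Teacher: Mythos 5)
Your proposal is correct and follows essentially the same route as the paper: Faedo--Galerkin approximation, energy estimates exploiting the bound $\normLp{\infty}{\bze}$ to get uniform $L^\infty(0,T;H)\cap L^2(0,T;V)$ and $L^2(0,T;V')$ bounds, Aubin--Lions compactness, and identification of the nonlinear limit via the Lipschitz continuity of $\bze$ (the paper organizes this last step as a term-by-term decomposition with a Gagliardo--Nirenberg interpolation bound on $b(\bze(\bu)-\bze(\bu_m),\bu_m,\bw)$, whereas you phrase it as a strong-times-weak pairing, but the underlying mechanism is the same).
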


\begin{theorem}[First-order regularity of calmed systems]\label{thm_cNSE_vel_reg}
 Let $T>0$. Suppose that $\bu_0 \in V$ and that $\bbf \in L^2(0,T;H)$. Consider a weak solution $\bu$ to calmed NSE or calmed rNSE $\eqref{cNSE_weak}$ on the interval $[0,T]$. Then $\bu \in C([0,T]; V) \cap L^2(0,T; H^2\cap V)$ and $\dt \bu \in L^2(0,T; H)$.
\end{theorem}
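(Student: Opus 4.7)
The plan is to derive, first formally and then rigorously, an energy estimate in the $V$-norm by testing the equation with $A\bu$. The decisive feature is that Condition~\ref{zeta_cond_bdd} of Definition~\ref{zeta_def} yields the uniform bound $\|\bze(\bu(t))\|_{L^\infty}\le M_\epsilon$, which in both the advective and rotational cases gives the pointwise estimate $|B(\bze(\bu),\bu)|\le C M_\epsilon|\nabla\bu|$ (the constant $C$ absorbing the geometric factor from the cross product in the rotational case). Formally pairing \eqref{cNSE_weak_eqn} with $A\bu$ thus produces
\begin{align*}
\tfrac12\tfrac{d}{dt}\|\nabla\bu\|_{L^2}^2 + \nu\|A\bu\|_{L^2}^2
\le \|\bbf\|_{L^2}\|A\bu\|_{L^2} + CM_\epsilon\|\nabla\bu\|_{L^2}\|A\bu\|_{L^2},
\end{align*}
and after absorbing the $\|A\bu\|_{L^2}$ factors via Young's inequality, Gr\"onwall closes on $[0,T]$ to give $\bu\in L^\infty(0,T;V)\cap L^2(0,T;H^2\cap V)$ with bounds depending only on $\|\bu_0\|_V$, $\|\bbf\|_{L^2(0,T;H)}$, $\nu$, $\epsilon$, and $T$. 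Reading the equation as $\dt\bu=\bbf-\nu A\bu-B(\bze(\bu),\bu)$ in $H$ then yields $\dt\bu\in L^2(0,T;H)$, and the Lions--Magenes lemma upgrades $\bu$ to $C([0,T];V)$ with the initial datum attained in $V$.

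To make this formal step rigorous I would repeat the computation on Galerkin approximations $\bu_m$ constructed (as in the proof of Theorem~\ref{thm_cNSE_vel_exi}) from the eigenbasis of the Stokes operator, so that the Galerkin projection commutes with $A$ and the test with $A\bu_m$ is legitimate. The same bounds transfer uniformly in $m$, and an Aubin--Lions compactness argument (together with $\dt\bu_m$ bounded uniformly in $L^2(0,T;H)$ via the equation) yields a strong subsequential limit $\tilde\bu$ in $L^2(0,T;V)$. The $1$-Lipschitz property of $\bze$ from Condition~\ref{zeta_cond_Lip} is precisely what lets us pass to the limit in the nonlinearity $B(\bze(\bu_m),\bu_m)$, so $\tilde\bu$ is a strong solution with the claimed regularity and initial data $\bu_0$.

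The remaining step, and the one I expect to be the main obstacle, is to identify this strong solution $\tilde\bu$ with the given weak solution $\bu$; the introduction explicitly notes that weak-solution uniqueness for the calmed systems is open. The identification should be carried out by a weak--strong uniqueness argument: set $\bw:=\bu-\tilde\bu$, subtract the two equations, and test with $\bw$ in $H$ to obtain a differential inequality for $\|\bw\|_{L^2}^2$ that closes via Gr\"onwall. The nonlinear difference $B(\bze(\bu),\bu)-B(\bze(\tilde\bu),\tilde\bu)$ splits into pieces controlled by $|\bze(\bu)-\bze(\tilde\bu)|\le|\bw|$ from Condition~\ref{zeta_cond_Lip}, the uniform $L^\infty$-bound on $\bze(\tilde\bu)$, and the $L^2(0,T;H^2)$-regularity of $\tilde\bu$; together with Ladyzhenskaya-type inequalities these estimates mirror classical weak--strong uniqueness for 3D Navier--Stokes and force $\bw\equiv 0$. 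Once $\bu\equiv\tilde\bu$ is established, all the regularity statements, including $\dt\bu\in L^2(0,T;H)$ and $\bu\in C([0,T];V)$, transfer to $\bu$, completing the proof.
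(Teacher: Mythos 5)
Your core argument is exactly the paper's: test with $A\bu$, use the uniform bound $\|\bze(\bu)\|_{L^\infty}\le M_\epsilon$ from Condition \ref{zeta_cond_bdd} to get $\|B(\bze(\bu),\bu)\|_{L^2}\le M_\epsilon\|\nabla\bu\|_{L^2}$, absorb via Young, close with Gr\"onwall, then read $\dt\bu\in L^2(0,T;H)$ off the equation and upgrade to $C([0,T];V)$. The one place you go beyond the paper is the final identification step. The paper simply says it works formally and that ``the results can be made rigorous using the Galerkin procedure,'' which strictly speaking only produces \emph{some} strong solution $\tilde\bu$ with the claimed regularity; the theorem as stated asserts the regularity of an \emph{arbitrary} weak solution, and since weak-solution uniqueness is open, one does need to identify the given $\bu$ with $\tilde\bu$. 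You are right that this is done by a weak--strong uniqueness argument, and it is exactly the estimate the paper later carries out in the proof of Theorem \ref{thm_cNSE_vel_uni}: the difference term $b(\bze(\bu)-\bze(\tilde\bu),\tilde\bu,\bw)$ is controlled via the Lipschitz property of $\bze$ and $\|\lap\tilde\bu\|_{L^2}\in L^2(0,T)$ (only the \emph{strong} solution needs $H^2$ regularity here, so the argument closes with $\bu$ merely weak), while $b(\bze(\bu),\bw,\bw)$ is handled by the $L^\infty$ bound on $\bze$. So your proposal is correct and, if anything, more complete in its logical bookkeeping than the paper's own presentation, which defers this identification to the subsequent uniqueness theorem. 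A minor cosmetic point: the bound on the nonlinearity should be stated in $L^2$ rather than pointwise, since $B$ carries the Leray projection $P_\sigma$; this does not affect the estimate.
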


\begin{theorem}[Global well-posedness of strong solutions to calmed systems]\label{thm_cNSE_vel_uni}
    Let $T>0$, $\bu_0, \in V$, and let $\bbf \in L^2(0,T; H)$. Suppose $\bze$ is a calming function which satisfies conditions \ref{zeta_cond_Lip}, \ref{zeta_cond_bdd}, and \ref{zeta_cond_conv} of Definition \ref{zeta_def}. Then there exists a strong solution $\bu \in C([0,T];V)\cap L^2(0,T; H^2\cap V)$ to calmed NSE \eqref{cNSE} and calmed rNSE \eqref{cNSE_vor} which depends continuously on its initial data and is unique in the class of weak solutions. 
\end{theorem}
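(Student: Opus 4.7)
The existence of a strong solution $\bu \in C([0,T];V) \cap L^2(0,T; H^2 \cap V)$ with $\dt \bu \in L^2(0,T;H)$ follows immediately by combining Theorem \ref{thm_cNSE_vel_exi} (which produces a weak solution on $[0,T]$) with the regularity upgrade of Theorem \ref{thm_cNSE_vel_reg} (which, since $\bu_0 \in V$ and $\bbf \in L^2(0,T;H)$, automatically promotes any such weak solution to a strong one). It therefore remains to prove uniqueness in the class of weak solutions together with continuous dependence on the initial data, and my plan is to establish both simultaneously via a single energy estimate on the difference of two solutions.

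Concretely, I would take two weak solutions $\bu_1,\bu_2$ with initial data $\bu_0^{(1)},\bu_0^{(2)}\in V$ (both automatically strong by Theorem \ref{thm_cNSE_vel_reg}), set $\bw := \bu_1 - \bu_2$, and rewrite the nonlinear difference bilinearly as
\begin{equation*}
B(\bze(\bu_1),\bu_1) - B(\bze(\bu_2),\bu_2) = B\bigl(\bze(\bu_1) - \bze(\bu_2),\,\bu_1\bigr) + B\bigl(\bze(\bu_2),\,\bw\bigr).
\end{equation*}
Pairing the difference equation with $\bw$ and integrating over $\Omega$, the Lipschitz bound $|\bze(\bu_1) - \bze(\bu_2)| \leq |\bw|$ from Condition \ref{zeta_cond_Lip} of Definition \ref{zeta_def} controls the first piece by a constant times $\|\nabla \bu_1\|_{L^2}\|\bw\|_{L^4}^2$ in both the advective and the rotational cases (in the latter by reorganizing via the cyclic identity $(\mathbf{A}\times\mathbf{B})\cdot\mathbf{C} = (\mathbf{B}\times\mathbf{C})\cdot\mathbf{A}$). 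The second piece no longer vanishes when paired with $\bw$, because $\bze(\bu_2)$ is in general neither divergence free (advective form) nor parallel to $\bw$ (rotational form); however, Condition \ref{zeta_cond_bdd} supplies $\|\bze(\bu_2)\|_{L^\infty} \leq C_\epsilon$, which is precisely what is needed to bound it by $C_\epsilon \|\nabla \bw\|_{L^2}\|\bw\|_{L^2}$.

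Applying the three-dimensional Ladyzhenskaya inequality $\|\bw\|_{L^4}^2 \leq C\|\bw\|_{L^2}^{1/2}\|\nabla \bw\|_{L^2}^{3/2}$ together with Young's inequality to absorb every factor of $\|\nabla \bw\|_{L^2}^2$ into the dissipation $\nu\|\nabla\bw\|_{L^2}^2$ on the left side, I expect a differential inequality of the schematic form
\begin{equation*}
\frac{d}{dt}\|\bw\|_{L^2}^2 \leq \bigl(C\|\nabla \bu_1\|_{L^2}^{4} + C(\epsilon,\nu)\bigr)\|\bw\|_{L^2}^2.
\end{equation*}
Because $\bu_1 \in C([0,T];V)$ the prefactor is bounded on $[0,T]$, so Grönwall's inequality yields $\|\bw(t)\|_{L^2}^2 \leq \|\bw(0)\|_{L^2}^2 \exp(CT)$, which simultaneously gives uniqueness (when $\bu_0^{(1)} = \bu_0^{(2)}$) and Lipschitz continuous dependence in $C([0,T];H)$ on the initial data. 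The main obstacle I anticipate is exactly the non-cancellation of $\langle B(\bze(\bu_2),\bw),\bw\rangle$, which breaks the usual 3D Navier-Stokes weak-strong uniqueness machinery; the crucial point is that this loss is repaid in full by the $L^\infty$-boundedness of $\bze$ from Condition \ref{zeta_cond_bdd}, which is precisely the feature that calming was designed to introduce.
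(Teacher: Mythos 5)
Your proposal is correct and takes essentially the same route as the paper: existence is obtained by combining Theorems \ref{thm_cNSE_vel_exi} and \ref{thm_cNSE_vel_reg}, and uniqueness together with continuous dependence follows from an energy estimate on the difference of two (automatically strong) solutions, splitting the nonlinear increment bilinearly and controlling one piece via the Lipschitz property of $\bze$ and the other via its $L^\infty$ bound from Condition \ref{zeta_cond_bdd}, then applying Gr\"onwall. The only immaterial differences are that the paper attaches the Lipschitz increment to $\bu_2$ and bounds that term as $\normLp{3}{\tu}\normLp{6}{\grad\bu_2}\normLp{2}{\tu}\leq C_\nu\normLp{2}{\lap\bu_2}^{4/3}\normLp{2}{\tu}^2+\tfrac{\nu}{4}\normLp{2}{\grad\tu}^2$ using the $L^2(0,T;H^2)$ regularity of strong solutions, whereas you use the Ladyzhenskaya interpolation and need only $\bu_1\in L^4(0,T;V)$.
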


For our calmed systems we also have the convergence of \eqref{cNSE} (resp. \eqref{cNSE_vor}) to \eqref{NSE} (resp. \eqref{NSE_vor}) on short time intervals.

\begin{theorem}[Convergence]\label{thm_cNSE_conv}
    Let $T>0$, and let $\bze$ be a calming function satisfying conditions \ref{zeta_cond_Lip}, \ref{zeta_cond_bdd}, and \ref{zeta_cond_conv} of Definition \ref{zeta_def}, where $\beta \geq 1$ is the minimal value for which \ref{zeta_cond_conv} holds. 
    Suppose
    \begin{align}\label{NSE_reg_cond}
        \bu \in C([0,T]; V)\cap L^{2\beta}(0,T; H^2\cap V)
    \end{align} 
    is a strong solution to the $3$D Navier-Stokes equation written either in its velocity form \eqref{NSE} or rotational form \eqref{NSE_vor} with initial data $\bu_0 \in V$ and forcing term $\bbf \in L^2(0,T;H)$.
    Suppose $\bu^\epsilon \in C([0,T]; V)\cap L^2(0,T; H^2\cap V)$ is a solution to the $3$D calmed NSE \eqref{cNSE} (resp. $3$D calmed rNSE \eqref{cNSE_vor}) with the same initial data $\bu_0$ and forcing term $\bbf$. Then 
    \begin{align}
        \| \bu - \bu^\epsilon \|_{L^\infty V} \leq K \epsilon^\alpha,
    \end{align}
    where $K>0$ is a constant depending only on $\Omega, \nu, \beta, \|\bu\|_{L^\infty V}, \normLp{2}{\lap\bu}, T,$ and $\alpha, \beta$ are determined by the choice of $\bze$ and are as given by condition \ref{zeta_cond_conv} of Definition \ref{zeta_def}.
\end{theorem}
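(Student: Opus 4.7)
The plan is to study the difference $\bw := \bu - \bu^\epsilon$, which has $\bw(0)=\mathbf{0}$ and satisfies, formally,
\begin{equation*}
\dt \bw + \nu A\bw + B(\bu,\bu) - B(\bze(\bu^\epsilon), \bu^\epsilon) = 0.
\end{equation*}
Since we want an $L^\infty V$ estimate, I would pair this equation with $A\bw$ and track the evolution of $\|\bw\|_V^2$, absorbing factors of $\|A\bw\|_{L^2}^2$ into the dissipation via Young's inequality whenever they appear. A preliminary observation is that condition \ref{zeta_cond_conv} applied at $\bx = \mathbf{0}$ forces $\bze(\mathbf{0}) = \mathbf{0}$, which combined with the Lipschitz bound of condition \ref{zeta_cond_Lip} yields the useful sublinear estimate $|\bze(\bx)| \leq |\bx|$ pointwise.

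The algebraic heart of the argument is the decomposition
\begin{equation*}
B(\bu,\bu) - B(\bze(\bu^\epsilon),\bu^\epsilon) = B\bigl(\bu - \bze(\bu),\, \bu\bigr) + B\bigl(\bze(\bu) - \bze(\bu^\epsilon),\, \bu\bigr) + B\bigl(\bze(\bu^\epsilon),\, \bw\bigr),
\end{equation*}
which is purely algebraic and therefore applies in both the advective and rotational forms of $B$. For the first piece I would use $|\bu - \bze(\bu)| \leq C\epsilon^\alpha|\bu|^\beta$ from condition \ref{zeta_cond_conv}, together with Agmon's inequality $\|\bu\|_{L^\infty} \leq C\|\bu\|_V^{1/2}\|\bu\|_{H^2}^{1/2}$ and H\"older; this is precisely where the assumption $\bu \in L^{2\beta}(0,T;H^2)$ is consumed, since the resulting forcing is of size $\epsilon^{2\alpha}\|\bu\|_V^{\beta+2}\|\bu\|_{H^2}^\beta$. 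For the second piece, the Lipschitz property gives $|\bze(\bu) - \bze(\bu^\epsilon)| \leq |\bw|$, and a standard trilinear estimate bounds the resulting pairing by $C\|\bu\|_V\|\bu\|_{H^2}\|\bw\|_V^2$ after Young absorption. Together, these two pieces produce a differential inequality of the form
\begin{equation*}
\frac{d}{dt}\|\bw\|_V^2 + \nu\|A\bw\|_{L^2}^2 \leq g_1(t)\|\bw\|_V^2 + g_2(t)\epsilon^{2\alpha} + \text{(third piece)},
\end{equation*}
with $g_1, g_2 \in L^1(0,T)$ whose norms depend only on $\Omega, \nu, \beta, T, \|\bu\|_{L^\infty V}$, and the $L^{2\beta}(0,T;H^2)$-norm of $\bu$.

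The main obstacle is the third piece. Using $|\bze(\bu^\epsilon)| \leq |\bu^\epsilon| \leq |\bu| + |\bw|$, the part involving $\bu$ is dominated (via Agmon and Young) by something already absorbed into $g_1$, but the self-interaction $\bigl\||\bw|\cdot|\nabla\bw|\bigr\|_{L^2}$ leads, after Agmon and Young, to a term of the form $C\|\bw\|_V^6$ that is incompatible with a direct, linear Gronwall argument. I would circumvent this with a continuity-based bootstrap: define $T^* := \sup\{t \in [0,T] : \|\bw(s)\|_V \leq 1 \text{ for all } s \leq t\}$, which is strictly positive because $\bw \in C([0,T];V)$ with $\bw(0) = \mathbf{0}$. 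On $[0,T^*]$ we have $\|\bw\|_V^6 \leq \|\bw\|_V^2$, so classical Gronwall applied to the linearized inequality yields $\|\bw(t)\|_V^2 \leq K^2 \epsilon^{2\alpha}$ uniformly on $[0,T^*]$, with $K$ independent of $\epsilon$. Choosing $\epsilon$ small enough that $K\epsilon^\alpha < 1$, continuity of $\bw$ into $V$ prevents $T^*$ from being strictly less than $T$, so the bound extends to all of $[0,T]$ and the theorem follows.
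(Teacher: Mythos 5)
Your proposal is correct and follows essentially the same route as the paper: the same difference equation paired with $A\bw$, an equivalent decomposition of the nonlinearity (your three pieces regroup into the paper's four, with your third piece split via $|\bze(\bu^\epsilon)|\le|\bu|+|\bw|$ exactly as the paper handles its $N_2$ and $N_4$), the same use of condition \ref{zeta_cond_conv}, the Lipschitz bound, and Agmon/Gagliardo--Nirenberg estimates, and the same smallness ansatz plus continuity bootstrap to tame the $\|\bw\|_V^6$ term.
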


In section \ref{sec_crNSE_cauchy} we show that in fact, strong solutions to the calmed systems are Cauchy with respect to the calming parameter $\epsilon > 0$ and that the limit point obtained from this sequence is itself a strong solution to $3$D Navier-Stokes.

\begin{theorem}
[Existence of Strong Solutions to Navier-Stokes]
\label{thm_cNSE_Cauchy}
    For each $\epsilon > 0$, let $\bze$ be a calming function satisfying conditions \ref{zeta_cond_Lip}, \ref{zeta_cond_bdd}, and \ref{zeta_cond_conv} of Definition \ref{zeta_def} and let $\bu^\epsilon$ be a strong solution to calmed NSE \eqref{cNSE} or calmed rNSE \eqref{cNSE_vor} with initial data $\bu_0 \in V$ and forcing term $\bbf\in L^\infty(0,\infty; L^2)$. Suppose $T > 0$ is the maximal time for which 
    \begin{align}
        \notag
        \sup_{\epsilon > 0}\sup_{0\leq t \leq T}\normLp{2}{\grad\bu^\epsilon(t)} \leq \sqrt{2}\normLp{2}{\grad\bu_0}
    \end{align}
    is valid. Then, 
    \begin{enumerate}
        \item The sequence $\{\bu^\epsilon \}_{\epsilon > 0}$ is Cauchy in $L^\infty H \cap L^2 V$.
        \item The limit point of the sequence, $\bu$, is a strong solution to the $3$D Navier-Stokes equations \eqref{NSE} or \eqref{NSE_vor} on the interval $[0,T]$.
    \end{enumerate}
\end{theorem}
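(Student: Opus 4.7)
The plan is to combine the uniform $L^\infty V$ bound from the standing hypothesis with a further uniform $L^2(0,T;H^2)$ bound derived from a Stokes-operator energy estimate, then establish the Cauchy property by a difference-equation argument that leverages the pointwise calming bound $|\bze(\bx) - \bx| \le C\epsilon^\alpha|\bx|^\beta$, and finally pass to the limit in the weak formulation to identify the limit as a strong solution of the 3D Navier-Stokes equations.

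First, I would extract uniform bounds on $\{\bu^\epsilon\}$ in higher regularity spaces. Plugging $\bx = \mathbf{0}$ into condition \ref{zeta_cond_conv} yields $\bze(\mathbf{0}) = \mathbf{0}$, so condition \ref{zeta_cond_Lip} gives the pointwise estimate $|\bze(\bu^\epsilon)| \le |\bu^\epsilon|$. Testing the calmed equation with $A\bu^\epsilon$ and estimating $|\langle B(\bze(\bu^\epsilon), \bu^\epsilon), A\bu^\epsilon\rangle|$ via standard 3D Sobolev--Ladyzhenskaya inequalities yields
\[
\tfrac{d}{dt}\|\grad\bu^\epsilon\|_{L^2}^2 + \nu\|A\bu^\epsilon\|_{L^2}^2 \le C\nu^{-3}\|\grad\bu^\epsilon\|_{L^2}^6 + C\nu^{-1}\|\bbf\|_{L^2}^2.
\]
The hypothesis controls $\|\grad\bu^\epsilon(t)\|_{L^2}$ uniformly on $[0,T]$, so integration provides uniform bounds $\|\bu^\epsilon\|_{L^2(0,T;H^2)} \le M$ and $\|\dt\bu^\epsilon\|_{L^2(0,T;H)} \le M$, with $M$ independent of $\epsilon$.

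For the Cauchy estimate, let $\epsilon, \epsilon' > 0$, set $\bw = \bu^\epsilon - \bu^{\epsilon'}$, and subtract the two calmed equations. The key algebraic rearrangement of the nonlinear difference is
\[
B(\bze(\bu^\epsilon), \bu^\epsilon) - B(\bze(\bu^{\epsilon'}), \bu^{\epsilon'}) = \bg^\epsilon - \bg^{\epsilon'} + B(\bw, \bu^\epsilon) + B(\bu^{\epsilon'}, \bw),
\]
where $\bg^\epsilon := B(\bze(\bu^\epsilon) - \bu^\epsilon, \bu^\epsilon)$ is a ``calming deficit.'' Testing with $\bw$, the pairing $\langle B(\bu^{\epsilon'}, \bw), \bw\rangle$ vanishes (by divergence-freeness in the advective case, and by $(\mathbf{A}\times\mathbf{B})\cdot\mathbf{B} = 0$ in the rotational case). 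Standard estimates absorb $|\langle B(\bw, \bu^\epsilon), \bw\rangle|$ into $\tfrac{\nu}{4}\|\grad\bw\|_{L^2}^2 + C\|\grad\bu^\epsilon\|_{L^2}^4\|\bw\|_{L^2}^2$. For the calming deficit, H\"older combined with $\|\bze(\bu^\epsilon) - \bu^\epsilon\|_{L^2} \le C\epsilon^\alpha\|\bu^\epsilon\|_{L^{2\beta}}^\beta$ (valid via Sobolev embedding for the values of $\beta$ arising from the calming functions of Proposition \ref{zeta_cond_prop}), the interpolation $\|\grad\bu^\epsilon\|_{L^3} \le C\|\grad\bu^\epsilon\|_{L^2}^{1/2}\|A\bu^\epsilon\|_{L^2}^{1/2}$, and $\|\bw\|_{L^6} \le C\|\grad\bw\|_{L^2}$ give
\[
|\langle\bg^\epsilon, \bw\rangle| \le \tfrac{\nu}{4}\|\grad\bw\|_{L^2}^2 + C\epsilon^{2\alpha}\|\grad\bu^\epsilon\|_{L^2}^{2\beta+1}\|A\bu^\epsilon\|_{L^2},
\]
which is time-integrable by the previous step. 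A Gr\"onwall application then yields $\|\bw\|_{L^\infty(0,T;H)}^2 + \nu\|\grad\bw\|_{L^2(0,T;L^2)}^2 \le K\max(\epsilon,\epsilon')^{2\alpha}$, establishing the Cauchy claim.

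Finally, let $\bu$ denote the limit. The uniform bounds give (by weak-$\ast$ and weak compactness, plus lower semicontinuity) $\bu \in L^\infty(0,T; V) \cap L^2(0,T; H^2\cap V)$ and $\dt\bu \in L^2(0,T; H)$, hence $\bu \in C([0,T]; V)$ by a standard continuity-in-time argument. To identify $\bu$ as a strong NSE solution, I pass to the limit in the weak formulation; the linear and time-derivative terms converge immediately, and for the nonlinearity the decomposition
\[
B(\bze(\bu^\epsilon), \bu^\epsilon) - B(\bu, \bu) = B(\bze(\bu^\epsilon) - \bu^\epsilon, \bu^\epsilon) + B(\bu^\epsilon - \bu, \bu^\epsilon) + B(\bu, \bu^\epsilon - \bu)
\]
shows the first summand vanishes in $L^2(0,T;V')$ via the pointwise calming estimate, and the other two vanish by the $L^2 V$ convergence $\bu^\epsilon \to \bu$ combined with the uniform $L^\infty V$ bound. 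Thus $\bu$ solves the 3D NSE in either form (the advective and rotational forms differing only by a gradient that is absorbed into the pressure). The main obstacle is closing the Gr\"onwall argument: it hinges jointly on the time-integrability of $\|\grad\bu^\epsilon\|_{L^2}^4$ supplied by the hypothesis and on the $L^1$-in-time control of the calming-deficit pairing, the latter requiring the uniform $L^2(H^2)$ bound established at the outset.
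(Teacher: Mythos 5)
Your proposal follows essentially the same route as the paper's proof: a uniform-in-$\epsilon$ $L^2(0,T;H^2)$ bound obtained by testing with $A\bu^\epsilon$ (this is the paper's Lemma \ref{lem_grad_bd}), a Cauchy estimate for the difference of two solutions driven by the pointwise calming condition \eqref{zeta_pwise_conv} and Gr\"onwall, and passage to the limit in the weak formulation using exactly the three-term decomposition $B(\bze(\bu^\epsilon),\bu^\epsilon)-B(\bu,\bu)=B(\bze(\bu^\epsilon)-\bu^\epsilon,\bu^\epsilon)+B(\bu^\epsilon-\bu,\bu^\epsilon)+B(\bu,\bu^\epsilon-\bu)$ that the paper uses. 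Your rearrangement of the nonlinear difference in the Cauchy step is algebraically equivalent to the paper's (the paper puts the calmed velocity $\bze^\delta(\bu^\delta)$ in the advecting slot of the $\tu$-term and estimates it via $\normLp{\infty}{\bu^\delta}\le C\normLp{2}{A\bu^\delta}$, rather than seeking a cancellation).

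There is, however, one step that fails as written: the claim that $\left\langle B(\bu^{\epsilon'},\bw),\bw\right\rangle$ vanishes in the rotational case ``by $(\mathbf{A}\times\mathbf{B})\cdot\mathbf{B}=0$.'' In the rotational form $B(\bu^{\epsilon'},\bw)=P_\sigma\lp(\grad\times\bw)\times\bu^{\epsilon'}\rp$, so the pairing is $\int_\Omega \lp(\grad\times\bw)\times\bu^{\epsilon'}\rp\cdot\bw\,d\bx$, a scalar triple product of three generically distinct vectors; the orthogonality identity \eqref{cross_prod_ortho} only applies when the vector being dotted coincides with one of the cross-product factors, and here it does not (that is precisely why the energy identity of Theorem \ref{thm_energy_identity} requires the extra parallelism condition \ref{zeta_cond_parll}). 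The advective-case cancellation $b(\bu^{\epsilon'},\bw,\bw)=0$ is fine since $\bu^{\epsilon'}\in V$. The repair is routine and keeps your Gr\"onwall argument intact: estimate the term instead of cancelling it, e.g.
\begin{align*}
\abs{\left\langle B(\bu^{\epsilon'},\bw),\bw\right\rangle}
\leq \normLp{2}{\grad\bw}\normLp{\infty}{\bu^{\epsilon'}}\normLp{2}{\bw}
\leq \frac{\nu}{8}\normLp{2}{\grad\bw}^2 + C_\nu\normLp{2}{A\bu^{\epsilon'}}^2\normLp{2}{\bw}^2,
\end{align*}
where the coefficient $\normLp{2}{A\bu^{\epsilon'}}^2$ is integrable in time by your uniform $H^2$ bound; this is exactly how the paper handles its analogous term. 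With that correction the argument is complete and matches the paper's.
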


While calmed NSE and calmed rNSE share the same properties of global well-posedness, we can see a key distinction between the two in the next theorem when $\bze$ is assumed to be pointwise parallel.

\begin{theorem}[Energy identity for weak solutions of calmed rNSE \eqref{cNSE_vor}] \label{thm_energy_identity}
Let $\nu>0$, $\epsilon>0$, $\bu_0\in H$, and $\bbf \in L^2(0,T;V')$ be given.   Suppose $\bze$ satisfies conditions \ref{zeta_cond_Lip}, \ref{zeta_cond_bdd}, \ref{zeta_cond_conv}, and \ref{zeta_cond_parll} of Definition \ref{zeta_def}.
Let $\bu^\epsilon$ be a weak solution to calmed rNSE \eqref{cNSE_vor}.  
Then $\bu^\epsilon$ the satisfies the energy equalities 
\begin{align}\label{energy_eq}
    \frac{1}{2}\frac{d}{dt}\normLp{2}{\bu^\epsilon}^2 + \nu \normLp{2}{\grad\bu^\epsilon}^2 = \left\langle \bbf, \bu^\epsilon \right\rangle.
\end{align}
and 
\begin{align}\label{energy_eq_integrated}
\normLp{2}{\bu^\epsilon(t)}^2 + 2\nu\int_0^t \normLp{2}{\grad\bu^\epsilon(s)}^2\,ds = \normLp{2}{\bu_0}^2 +2\int_0^t\left\langle \bbf(s), \bu^\epsilon(s) \right\rangle\,ds.
\end{align}
\end{theorem}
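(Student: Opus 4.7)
The plan is to test the weak formulation \eqref{cNSE_weak_eqn} for \eqref{cNSE_vor} against the solution itself, $\bv = \bu^\epsilon(t)$, and exploit condition \ref{zeta_cond_parll} together with the geometric identity \eqref{cross_prod_ortho} to make the nonlinear term vanish identically. The remaining linear balance is exactly the strong-form energy identity for the Stokes problem with forcing, so no further cancellation work is needed.

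First I would verify that the test $\bv = \bu^\epsilon$ is admissible. Because $\bze$ is bounded by condition \ref{zeta_cond_bdd}, and $\grad\times\bu^\epsilon\in L^2(0,T;L^2)$ since $\bu^\epsilon\in L^2(0,T;V)$, the Lamb-type nonlinearity satisfies
\[
(\grad\times\bu^\epsilon)\times\bze(\bu^\epsilon)\in L^2(0,T;L^2)\hookrightarrow L^2(0,T;V'),
\]
which is consistent with the requirement $\dt\bu^\epsilon \in L^2(0,T;V')$ from Definition \ref{cNSE_vel_weak_soln}. Combining $\bu^\epsilon\in L^2(0,T;V)$ with $\dt\bu^\epsilon\in L^2(0,T;V')$, the Lions--Magenes lemma yields absolute continuity of $t\mapsto\normLp{2}{\bu^\epsilon(t)}^2$ together with the distributional identity
\[
\frac{d}{dt}\normLp{2}{\bu^\epsilon(t)}^2 = 2\left\langle \dt\bu^\epsilon(t),\bu^\epsilon(t)\right\rangle_{V',V}\quad \text{for a.e.\ } t\in(0,T),
\]
which justifies pairing the weak equation with the time-dependent test function $\bu^\epsilon$ through a standard density argument.

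Next I would evaluate the nonlinear pairing pointwise. By condition \ref{zeta_cond_parll} applied at $\bx=\bu^\epsilon(\bx,t)$, there exists a scalar $\lambda^\epsilon(\bu^\epsilon(\bx,t))\in\R$ with $\bze(\bu^\epsilon) = \lambda^\epsilon(\bu^\epsilon)\,\bu^\epsilon$ pointwise (noting that the condition forces $\bze(\boldsymbol 0)=\boldsymbol 0$, so the identity is valid on $\set{\bu^\epsilon=\boldsymbol 0}$ as well). Therefore
\[
\bigl((\grad\times\bu^\epsilon)\times\bze(\bu^\epsilon)\bigr)\cdot\bu^\epsilon = \lambda^\epsilon(\bu^\epsilon)\,\bigl((\grad\times\bu^\epsilon)\times\bu^\epsilon\bigr)\cdot\bu^\epsilon = 0
\]
almost everywhere in $\Omega\times(0,T)$ by \eqref{cross_prod_ortho}. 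Integrating in space gives $\langle B(\bze(\bu^\epsilon),\bu^\epsilon),\bu^\epsilon\rangle=0$. Combining this with the standard identity $\langle A\bu^\epsilon,\bu^\epsilon\rangle=\normLp{2}{\grad\bu^\epsilon}^2$ (valid for $\bu^\epsilon\in V$) and the Lions--Magenes step produces \eqref{energy_eq}. Integrating in time on $[0,t]$ and using $\bu^\epsilon\in C([0,T];H)$ to evaluate the endpoints then yields \eqref{energy_eq_integrated}.

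The main technical obstacle is the joint use of Lions--Magenes with the pointwise parallelism: in the classical 3D NSE setting, the analogous Lions--Magenes step fails because the advective nonlinearity is not known to lie in $L^2(0,T;V')$ at the level of weak solutions. Here, the boundedness of $\bze$ upgrades the nonlinearity to $L^2(0,T;L^2)$, so no Galerkin approximation or extra regularity hypothesis is needed, and the parallelism assumption \ref{zeta_cond_parll} converts the cancellation from a cubic-level identity (which fails for the advective form \eqref{cNSE}) into the quadratic-level orthogonality \eqref{cross_prod_ortho}, which is the essential reason calmed rNSE admits an energy \emph{equality} rather than merely an inequality.
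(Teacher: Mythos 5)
Your proposal is correct and follows essentially the same route as the paper: test the weak formulation with $\bu^\epsilon$ itself, invoke the Lions--Magenes lemma (valid here because $\dt\bu^\epsilon\in L^2(0,T;V')$, unlike for 3D NSE weak solutions), kill the nonlinear term via condition \ref{zeta_cond_parll} together with the orthogonality \eqref{cross_prod_ortho}, and integrate in time using $\bu^\epsilon\in C([0,T];H)$. Your write-up simply makes explicit a few points the paper leaves implicit, such as the $L^2(0,T;L^2)$ bound on the Lamb-type nonlinearity and the behavior of $\bze$ on the set where $\bu^\epsilon$ vanishes.
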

\begin{remark}
    Combining Theorems \ref{thm_cNSE_conv} and \ref{thm_energy_identity}, one can easily show that strong solutions to the Navier-Stokes equations enjoy an energy equality, by passing to a limit as $\epsilon\rightarrow0$ in \eqref{energy_eq_integrated}.  Hence, our approach can be seen as an alternate proof of this well-known fact.
\end{remark}

From these energy identities we deduce the existence of a global attractor, under the assumption that $\bbf$ is time-independent.

\begin{theorem}[Existence of a global attractor]\label{thm_cNSE_vort_Glatt}
 Let $\bze$ be a calming function which satisfies conditions 
 \ref{zeta_cond_Lip},
 \ref{zeta_cond_bdd}, \ref{zeta_cond_conv}, and \ref{zeta_cond_parll} of Definition \ref{zeta_def}. If $\bu_0 \in H$ and $\bbf \in H$ then the dynamical system on $H$ generated by calmed rNSE \eqref{cNSE_vor} has a global attractor $\mathcal{A}$ on $H$.
\end{theorem}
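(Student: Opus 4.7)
The approach is to apply the standard theorem on existence of global attractors for dissipative semigroups possessing a compact absorbing set (see, e.g., Temam's classical result). The program requires (i) a continuous semigroup $S(t): H \to H$, (ii) a bounded absorbing set in $H$, and (iii) a bounded absorbing set in $V$; the compact embedding $V \hookrightarrow\!\hookrightarrow H$ will then supply the asymptotic compactness in $H$. The continuous semigroup $S(t)$ is furnished on $V$ by the global well-posedness and continuous dependence asserted in Theorem \ref{thm_cNSE_vel_uni}; to extend it meaningfully to $H$, one uses the fact that the energy identity of Theorem \ref{thm_energy_identity} forces any weak solution to satisfy $\bu \in L^2(0,T;V)$, so $\bu(t) \in V$ for almost every $t > 0$, after which the unique strong solution framework applies and $S(t)$ is well-defined for $t>0$.

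For the absorbing set in $H$, I would use the energy identity \eqref{energy_eq}, bound $\langle \bbf, \bu\rangle$ via Cauchy--Schwarz, Young, and Poincar\'e, and apply Gronwall's inequality to deduce
\[
\|\bu(t)\|_{L^2}^2 \leq e^{-\nu\lambda_1 t}\|\bu_0\|_{L^2}^2 + \frac{\|\bbf\|_{L^2}^2}{\nu^2\lambda_1^2}.
\]
This yields an absorbing ball $\mathcal{B}_0 \subset H$ of radius $\rho_0^2 := 1 + \|\bbf\|_{L^2}^2/(\nu^2\lambda_1^2)$. Integrating \eqref{energy_eq_integrated} over $[t, t+1]$ for $t$ large enough that $\bu(t)\in\mathcal{B}_0$ also yields $\int_t^{t+1} \|\nabla\bu(s)\|_{L^2}^2\,ds \leq M_1$, with $M_1$ independent of the initial datum.

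To obtain an absorbing set in $V$, I would test \eqref{cNSE_vor_mo} against the Stokes operator $A\bu$. The critical point is the boundedness of $\bze$ asserted in condition \ref{zeta_cond_bdd} of Definition \ref{zeta_def} (quantified in Proposition \ref{zeta_cond_prop}: $\|\bze(\bu)\|_{L^\infty} \leq C_\epsilon$). The nonlinear term is then controlled by
\[
|\langle (\nabla\times\bu)\times\bze(\bu), A\bu\rangle| \leq C_\epsilon \|\nabla\bu\|_{L^2}\|A\bu\|_{L^2} \leq \frac{\nu}{4}\|A\bu\|_{L^2}^2 + \frac{C_\epsilon^2}{\nu}\|\nabla\bu\|_{L^2}^2,
\]
and an analogous estimate handles $\langle \bbf, A\bu\rangle$. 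This yields a differential inequality of the form $\frac{d}{dt}\|\nabla\bu\|_{L^2}^2 \leq a\|\nabla\bu\|_{L^2}^2 + b$ with $a,b$ depending on $\nu, \epsilon, \|\bbf\|_{L^2}$ but not on $\bu_0$. The uniform Gronwall lemma, combined with the time-integrated bound $M_1$, then produces an absorbing ball $\mathcal{B}_1 \subset V$.

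Finally, since $V \hookrightarrow\!\hookrightarrow H$ compactly, $\mathcal{B}_1$ is precompact in $H$ and is an absorbing set for $S(t)$ in $H$; continuity of $S(t)$ on $H$ follows from the continuous dependence in Theorem \ref{thm_cNSE_vel_uni} together with an energy-difference estimate on $\bu^{(1)} - \bu^{(2)}$. The standard attractor theorem then delivers the global attractor $\mathcal{A}\subset H$. The main technical obstacle will be the $V$-absorbing step, specifically the uniform Gronwall application and the need to rule out exponential-in-$t$ growth of the coefficient; this is controlled precisely because the calming function keeps the advective velocity uniformly bounded in $L^\infty$ for fixed $\epsilon > 0$, which is exactly the property that distinguishes our setting from standard 3D Navier--Stokes. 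A secondary subtlety is that uniqueness of weak solutions is open, so strictly speaking $S(t)$ on $H$ is defined via the instantaneous $V$-regularization noted above; all absorbing and attracting properties are stated and verified on trajectories emanating from $H$.
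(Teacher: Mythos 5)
Your proposal is correct and follows essentially the same route as the paper: the energy identity \eqref{energy_eq} plus Poincar\'e and Gr\"onwall gives the absorbing ball in $H$ and the time-averaged bound on $\|\nabla\bu\|_{L^2}^2$; testing against $A\bu$ and using $\|\bze\|_{L^\infty}\leq C_\epsilon$ to control the nonlinearity, followed by the uniform Gr\"onwall argument (which the paper carries out by hand via a double integration over $[s,t]$ and then $s\in[t-1,t]$), produces the $V$-absorbing ball, and compactness of $V\hookrightarrow H$ finishes the proof. Your explicit remarks on defining $S(t)$ on $H$ via instantaneous $V$-regularization address a point the paper leaves implicit, but the substance of the argument is identical.
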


\begin{remark}
All of the above results hold, \textit{mutatis mutandis}, in the case of periodic boundary conditions, after suitable modification imposing a mean-free condition.
\end{remark}

\section{Preliminaries}\label{secPre}
\noindent


\noindent
In this section, we lay out some notation and preliminary notions that will be used later in the text. We use standard notation and results which can be found in, e.g., \cite{Constantin_Foias_1988, Temam_2001_Th_Num, Robinson_2001}. 
We assume that $\Omega \subset\nR^3$ is a bounded, open, connected set with $C^2$ boundary. 

Furthermore, we assume $\Omega$ is convex, so that there exists $c_1, c_2 > 0$ for which 
\begin{align}\label{Stokes_domain}
c_1 \normLp{2}{A \bu} \leq
\normHs{2}{\bu} \leq
c_2 \normLp{2}{A \bu},
\end{align}
where $A$ is defined in \eqref{Stokes} (see, e.g., \cite{dauge1989stationaryStokes, Guermond_Salgado_2011_Stokes}).
Let $C_c^\infty(\Omega)$ denote the space of smooth, compactly supported test functions from $\Omega$ to $\R^3$, and let $H_0^1(\Omega) \equiv H_0^1$ denote the closure of $C_c^\infty(\Omega)$ in $H^1(\Omega)$.

We set
\begin{align*}
    \mathcal{V} &= 
    \left\{\boldsymbol{\phi} \in C^\infty_c(\Omega): 
    \quad \grad\cdot \boldsymbol{\phi} = 0\right\},
\end{align*}
and let $H$ and $V$ be the closure of $\mathcal{V}$ in $L^2(\Omega)$ and $H^1(\Omega)$, respectively. 

We also denote the (real) $L^2$ inner-product and $H^m$ Sobolev norm by
\[
 (\bu,\bv) := \sum_{i=1}^3\int_{\Omega} u_i(\bx) v_i(\bx)\,d\bx,
\qquad
 \|\bu\|_{H^m} := 
 \lp \sum_{\abs{\alpha} \leq m} \normLp{2}{D^\alpha \bu}^2 \rp^\frac{1}{2},
\]
where $\alpha = \lp \alpha_1, \alpha_2, \alpha_3 \rp$ and $D^\alpha \bu = \partial_1^{\alpha_1}\partial_2^{\alpha_2}\partial_3^{\alpha_3} \bu$. For brevity, we will use the notation $L^2(\Omega) \equiv L^2$ and $H^m(\Omega) \equiv H^m$ throughout the paper.

We denote by $L^p(0,T; X)$ to space of Bochner integrable functions from $[0,T]$ to $X$ with norm given by 
\[ 
\| \bu \|_{L^p(0,T; X)} \equiv
\| \bu \|_{L^p X} := 
\lp\int_0^T \norm{\bu}_X^p \rp^{1/p}.  
\]

We recall Agmon's inequality for $s_1<\frac{n}{2}<s_2$, 
\begin{align}\label{agmon}
\|\bu\|_{L^\infty}\leq C\|\bu\|_{H^{s_1}}^{\theta}\| \bu \|_{H^{s_2}}^{1-\theta},
\end{align}
where $\frac{n}{2}=\theta s_1 + (1-\theta)s_2$, $\theta \in [0,1]$. We also recall the Gagliardo-Nirenberg-Sobolev interpolation inequality (see, e.g., \cite[p. 11]{Tao_2006}) 
in $\mathbb{R}^n$ for $1 \leq p,q < \infty$, 
\begin{align}\label{GNS}
  \normLp{p}{\bu} \leq C \normLp{q}{\bu}^\theta 
  \normLp{2}{D^\alpha\bu}^{1-\theta}, \qquad
  \frac{1}{p} = \frac{\theta}{q} + 
  \lp 1 -\theta \rp \lp \frac{1}{2} - \frac{\abs{\alpha}}{n} \rp.
\end{align}

Let $P_\sigma : L^2(\Omega) \to H$ be the Leray-Helmholtz orthogonal projection of $L^2(\Omega)$ onto $H$. Define the Stokes operator $A:\mathcal{D}(A) \subset H \to H$ as 
\begin{align}\label{Stokes}
    A := -P_\sigma \lap
\end{align}
with domain $\mathcal{D}(A) := H^2(\Omega) \cap V$. The operator $A$ is known to be positive-definite, self-adjoint, and with compact inverse $A^{-1}$ in $H$. From the Hilbert-Schmidt Theorem we obtain a sequence of eigenfunctions $\{\bw_j\}_{j=1}^\infty$ of $A^{-1}$, which are also eigenfunctions of $A$, with corresponding eigenvalues $\{\lambda_j\}_{j=1}^\infty$ such that $\{\bw_j\}_{j=1}^\infty$ is an orthonormal basis of $H$ and the sequence $\{\lambda_j\}_{j=1}^\infty$ is positive, monotone increasing, and tend toward infinity, so that 
$A\bw_j = \lambda_j \bw_j$ with $0 < \lambda_1 \leq \lambda_2 \leq \lambda_3 \leq \ldots$ and $\lim_{j\to\infty} \lambda_j = + \infty$. For further discussion see, e.g., \cite{Constantin_Foias_1988,Robinson_2001,Temam_2001_Th_Num}.
Due to the divergence-free condition, we can define a norm on $V$ by 
$$\left\langle A \bu, \bu \right\rangle
= \| A^{1/2}\bu \|_{L^2} 
= \normLp{2}{\grad\bu}^2.$$

We also recall the following Poincar\'e inequalities,
\begin{align*}\label{poincare}
    \normLp{2}{\bu}^2 &\leq \lambda_1^{-1} \normLp{2}{\grad\bu}^2 
    \quad \text{for all } \bu\in V, \\
    \normLp{2}{\grad \bu}^2 &\leq \lambda_1^{-1} \normLp{2}{A \bu}^2 
    \quad \text{for all } \bu \in \mathcal{D}(A).
\end{align*}
Denote by $P_m$ the projection onto the first $m$ eigenfunctions of $A$,
\begin{align}
    P_m \bu = \sum_{j=1 }^m u_j\bw_j.
\end{align}
This yields the following estimate: for $\bu \in H^s(\Omega),$ $s>0$, 
\begin{align}\label{proj_est}
    \normLp{2}{\lp I-P_m\rp \bu}^2 \leq \lambda_m^{-s} \normHs{s}{\bu}^2.
\end{align}

For $\bu \in C_c^\infty(\Omega)$ and $\bv \in \mathcal{V}$, we define the nonlinear term $B(\bu, \bv)$ by
\begin{subequations}\label{nonlinear_bi}
\begin{empheq}
{alignat=3}
\label{nonlinear_bi1}
B(\bu, \bv) & := P_\sigma \lp \lp \bu \cdot \grad \rp \bv\rp \hspace{6.5 mm}\quad \text{ in system \eqref{NSE}, or} \\ 
\label{nonlinear_bi2}
B(\bu, \bv) & := P_\sigma \lp \lp \grad \times \bv \rp \times \bu \rp \quad \text{ in system \eqref{NSE_vor}.}
\end{empheq}
\end{subequations}

For either case, the term $B(\cdot, \cdot)$ can be extended continuously to a bounded bilinear operator $B: H_0^1 \times V \to V'$. Similarly, we can define a trilinear operator $b: H_0^1\times V\times V \to \R$ by 
\begin{align}
    b(\bu, \bv, \bw) := \left \langle B(\bu, \bv), \bw \right\rangle
\end{align}
for all $\bu\in H_0^1$ and $\bv, \bw \in V$.

To recover the pressure term found in Systems \eqref{cNSE} and \eqref{cNSE_vor}, we will use a result of de Rham, which states for $\bbf \in C_c^\infty(\Omega)$, 
\begin{align}\label{deRham}
    \bbf = \grad p \text{ for some } p\in C_c^\infty(\Omega) \text{ if and only if } \left\langle \bbf, \bv \right\rangle = 0 \text{ for all } \bv\in \mathcal{V}.
\end{align}
See, e.g., \cite{Temam_2001_Th_Num,Wang_1993_deRham}.

\begin{lemma}\label{zeta_lem}
Suppose that $\bze$ is a calming function which satisfies condition \ref{zeta_cond_Lip} of Definition \ref{zeta_def}.
\begin{enumerate}
    \item If $\bu \in L^p(\Omega)$, $p \in [1,\infty]$, then $\bze(\bu)\in L^p(\Omega)$ and $\bze$ is Lipschitz in $ L^p(\Omega)$ with Lipschitz constant $C_L$.
    \item The mapping $I: L^2(0,T; H) \times L^2(0,T; V) \times L^2(0,T; H) \to \mathbb{R}$ defined by 
    \begin{align}
        I(\bu, \bv, \bw) = \int_0^T b(\bze(\bu), \bv,\bw) dt
    \end{align}
    is linear and continuous in its second and third component.
\end{enumerate}
\end{lemma}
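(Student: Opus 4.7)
For part (1), the strategy is to lift the pointwise $1$-Lipschitz property of $\bze$ to the $L^p$ level. First I would observe that $\bze(\boldsymbol{0}) = \boldsymbol{0}$, which follows immediately from condition \ref{zeta_cond_conv} of Definition \ref{zeta_def} by setting $\bx = \boldsymbol{0}$. Combined with condition \ref{zeta_cond_Lip}, this yields the pointwise estimates $|\bze(\bu(\bx))| \le |\bu(\bx)|$ and $|\bze(\bu(\bx)) - \bze(\bv(\bx))| \le |\bu(\bx) - \bv(\bx)|$ for a.e.\ $\bx \in \Omega$. Taking $L^p$ norms for $p < \infty$ (or essential suprema when $p = \infty$) then immediately yields both $\bze(\bu) \in L^p(\Omega)$ and the Lipschitz bound with $C_L = 1$ (so in particular this also covers any reasonable Lipschitz constant inherited from condition \ref{zeta_cond_Lip}).

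For part (2), linearity of $I$ in the second and third slots is inherited directly from the trilinearity of $b$, so the content is continuity. The key observation is that condition \ref{zeta_cond_bdd} forces
\[
\snorm{\bze(\bu(\cdot, t))}{L^\infty(\Omega)} \le M_\epsilon := \snorm{\bze}{L^\infty(\R^3)}
\]
uniformly in $t$ and independent of $\bu$. This is the crucial advantage calming provides: an \emph{a priori} $L^\infty$ bound on the slot occupied by the advective velocity. I would then invoke the standard trilinear estimate
\[
|b(\boldsymbol{\phi}, \bv, \bw)| \le C \snorm{\boldsymbol{\phi}}{L^\infty}\,\snorm{\grad \bv}{L^2}\,\snorm{\bw}{L^2},
\]
valid for both definitions \eqref{nonlinear_bi1} and \eqref{nonlinear_bi2} of $B$: in the advective case directly by H\"older after bounding $|(\boldsymbol{\phi}\cdot\grad)\bv\cdot\bw| \le |\boldsymbol{\phi}||\grad\bv||\bw|$; in the rotational case similarly, using $|(\grad\times\bv)\times\boldsymbol{\phi}| \le |\grad\bv||\boldsymbol{\phi}|$ pointwise. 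Substituting $\boldsymbol{\phi}=\bze(\bu(\cdot,t))$ and applying Cauchy--Schwarz in time produces
\[
|I(\bu,\bv,\bw)| \le C M_\epsilon\,\snorm{\bv}{L^2 V}\,\snorm{\bw}{L^2 H}.
\]
Linearity in the second (respectively third) slot combined with this bound then yields continuity in each of those slots in the usual way: $|I(\bu,\bv_n-\bv,\bw)| \to 0$ as $\bv_n \to \bv$ in $L^2(0,T;V)$, and similarly for $\bw$.

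The main (mild) obstacle is verifying the trilinear estimate uniformly across both forms of $B$, but each reduces to a single H\"older argument once the calming-supplied $L^\infty$ bound on $\bze(\bu)$ is in hand. Everything else is routine bookkeeping, and no integration by parts or use of the divergence-free structure is required in these bounds.
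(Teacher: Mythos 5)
Your argument is correct and is the standard one: the paper itself does not prove this lemma but defers to \cite{Enlow_Larios_Wu_2023_calming}, and the details you supply (the pointwise $1$-Lipschitz bound together with $\bze(\boldsymbol{0})=\boldsymbol{0}$ for part (1); the uniform bound $\|\bze(\bu)\|_{L^\infty}\le \|\bze\|_{L^\infty(\R^3)}$ followed by H\"older in space and Cauchy--Schwarz in time for part (2), applied identically to both forms \eqref{nonlinear_bi1} and \eqref{nonlinear_bi2}) are exactly what that citation contains. One minor remark: although the lemma's hypothesis highlights only condition \ref{zeta_cond_Lip}, you also invoke conditions \ref{zeta_cond_bdd} and \ref{zeta_cond_conv} of Definition \ref{zeta_def}; this is harmless since a calming function satisfies all three by definition, but it is worth being explicit that the $L^\infty$ bound of condition \ref{zeta_cond_bdd} is genuinely needed for your trilinear estimate in part (2), as the Lipschitz property alone would only give $\bze(\bu)\in L^2$ there.
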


\begin{proof}
    The proof can be found in \cite{Enlow_Larios_Wu_2023_calming} in the case 
    $$ b(\bze(\bu), \bv,\bw) = \lp P_\sigma( \lp\bze(\bu) \cdot \grad \rp \bv ), \bw \rp.$$ The proof for the case 
    $$ b(\bze(\bu), \bv,\bw) = \lp P_\sigma \lp \lp \grad\times \bv \rp \times \bze(\bu) \rp , \bw \rp $$ is nearly identical.
\end{proof}

 \section{Existence of Weak Solutions for Calmed NSE}\label{sec_cNSE_well_posedness}
We will prove the existence of solutions to \eqref{cNSE_weak} via Galerkin approximation. 
For $\bu_0 \in H$, the system
\begin{subequations}\label{gal_cNSE}
\begin{empheq}[left=\empheqlbrace]{alignat=3}
\dt \bu_m = - \nu A \bu_m - P_m B( \bze(\bu_m), \bu_m ) + P_m \bbf, \\ 
\bu_m(0, \bx) = P_m \bu_0(\bx)
\end{empheq}
\end{subequations}
is locally Lipschitz in $P_m(H)$ provided that $\bze$ is Lipschitz (see \cite{Enlow_Larios_Wu_2023_calming}, Lemma 3.2). So for each $m\in\nN$, there is some $T_m > 0$ for which a unique solution to \eqref{gal_cNSE} exists.

\subsection{Proof of Theorem \ref{thm_cNSE_vel_exi}}
Let $\bu_m$ be a solution to \eqref{gal_cNSE} on some maximum interval of existence $[0, T_m]$ with $T_m > 0$. 
Taking the inner product of \eqref{gal_cNSE} with $\bu^m$, we obtain
\begin{align*}
\frac{1}{2}\frac{d}{dt}\normLp{2}{\bu_m}^2 + 
\nu\normLp{2}{\grad\bu_m}^2 &= - 
\lp P_m B( \bze(\bu_m), \bu_m ), \bu_m  \rp + 
\left\langle  P_m \bbf, \bu_m \right\rangle \\ &= - 
\lp B( \bze(\bu_m), \bu_m ), \bu_m  \rp + 
\lp \bbf, \bu_m \rp. 
\end{align*}

Now, using H\"older's Inequality and Young's Inequality, 
\begin{align*}
&\quad
    \frac{1}{2}\frac{d}{dt}\normLp{2}{\bu_m}^2 + 
\nu\normLp{2}{\grad\bu_m}^2 
\\&\leq 
 \normLp{\infty}{\bze} \normLp{2}{\grad\bu_m} \normLp{2}{\bu_m} + 
 \norm{\bbf}_{V'}\normLp{2}{\grad \bu_m} \\ &\leq 
 \frac{\nu}{2}\normLp{2}{\grad\bu_m}^2 + 
 C_\nu\normLp{\infty}{\bze}^2 \normLp{2}{\bu_m}^2 + C_\nu \norm{\bbf}_{V'}^2.
\end{align*}

Rearranging terms yields the inequality
\begin{align}\label{gal_bound1}
    \frac{d}{dt}\normLp{2}{\bu_m}^2 + 
    \nu\normLp{2}{\grad\bu_m}^2 \leq 
    C_\nu \normLp{\infty}{\bze}^2\normLp{2}{\bu_m}^2 + 
    C_\nu \norm{\bbf}_{V'}^2.
\end{align}
Dropping the term $\nu\normLp{2}{\grad\bu_m}^2$ from the left-hand side of the inequality and applying Gr\"onwall's inequality yields
\begin{align}\label{cNSE_gal_bd1}
    \normLp{2}{\bu_m(t)}^2 & \leq 
    e^{ C_\nu \normLp{\infty}{\bze}^2 t} \normLp{2}{\bu_m(0)}^2 + 
    \int_0^t e^{ - C_\nu \normLp{\infty}{\bze}^2 \lp s-t\rp}
    \norm{\bbf}_{V'}^2 ds \\ &\leq 
    e^{ C_\nu \normLp{\infty}{\bze}^2 T_m} \lp \normLp{2}{\bu_0}^2 + 
    \| \bbf \|_{L^2 V'}^2 \rp. \notag
\end{align}
In fact, we can apply a standard bootstrapping argument to obtain that given any $T>0$, \eqref{cNSE_gal_bd1} remains valid if $T_m \equiv T$ for all $m\in\mathbb{N}$.
Thus $\bu_m$ is bounded in $L^\infty(0, T; L^2(\nT^2))$ independently of $m$. Integrating \eqref{gal_bound1} in time $t$ on the interval $[0,T]$, one obtains
\begin{align*}
    & \normLp{2}{\bu_m(T)}^2 - 
    \normLp{2}{\bu_m(0)}^2 + 
    \nu\int_0^T \normLp{2}{\grad\bu_m}^2 dt \\ &\leq 
    C_\nu \| \bbf \|_{L^2 V'}^2 + 
    C_\nu \normLp{\infty}{\bze}^2 
    \int_0^T \normLp{2}{\bu_m}^2 dt  \\ &\leq 
    C_\nu \| \bbf \|_{L^2 V'}^2 + 
    C_\nu \normLp{\infty}{\bze}^2
    T e^{ C_\nu \normLp{\infty}{\bze}^2 T} \lp \normLp{2}{\bu_0}^2 + 
    \| \bbf \|_{L^2 V'}^2 \rp.
 \end{align*}
Rearranging this inequality and applying \eqref{cNSE_gal_bd1} then yields, for a.e. $t\in [0,T]$,
\begin{align}
    \| \bu_m \|_{L^2 V}^2 \leq  
    C_\nu \lp 1 + \normLp{\infty}{\bze}^2
    T e^{ C_\nu \normLp{\infty}{\bze}^2 T} \rp 
    \lp \normLp{2}{\bu_0}^2 + 
    \| \bbf \|_{L^2 V'}^2 \rp
\end{align}
Therefore $\bu_m$ is bounded in $L^2(0, T; V)$ independently of $m$.

Now we check that $\dt \bu_m$ is bounded in $L^2(0, T; V')$ independently of $m$. Let $\bw \in V$ with $\normLp{2}{\grad\bw} = 1$. Taking the action of $\dt \bu$ on $\bw$  yields
\begin{align*}
    \abs{\left\langle \dt\bu_m, \bw \right\rangle} &\leq 
    \nu \abs{\left\langle A\bu_m, \bw \right\rangle} +
    \abs{\lp P_m B(\bze(\bu_m),\bu_m), \bw \rp} + 
    \abs{\lp P_m \bbf, \bw \rp} \\&=
    \nu \abs{\lp \grad\bu_m, \grad \bw \rp} +
    \abs{\lp B(\bze(\bu_m),\bu_m), P_m\bw \rp} + 
    \abs{\lp \bbf, P_m\bw \rp}.
\end{align*}
Note that 
\begin{align*}
\nu \abs{\lp \grad\bu_m, \grad\bw \rp} \leq 
\nu \normLp{2}{\grad\bu_m}\normLp{2}{\grad\bw} &=
\nu \normLp{2}{\grad\bu_m},
\end{align*}
and 
\begin{align*}
\abs{\lp B(\bze(\bu_m),\bu_m), P_m\bw \rp} \leq C
\normLp{\infty}{\bze} \normLp{2}{\grad\bu_m}\normLp{2}{\grad\bw} &=
C \normLp{\infty}{\bze} \normLp{2}{\grad\bu_m},
\end{align*}
and also
\begin{align*}
\abs{\lp \bbf, P_m\bw \rp} \leq
\norm{\bbf}_{V'}\norm{\bw}_V &\leq 
\norm{\bbf}_{V'}.
\end{align*}
From this we deduce that 
\begin{align}
    \norm{\dt \bu_m}_{L^2 V'} \leq C_{\nu, \epsilon} \lp \norm{\bu_m}_{L^2V} + \norm{\bbf}_{L^2V'} \  \rp, 
\end{align}
hence $\dt\bu_m$ is bounded in $L^2(0, T; V')$ independently of $m$.

By the Banach-Alaoglu Theorem and the above bounds, there exists $\bu \in L^\infty(0,T;H) \cap L^2(0,T; V)$ and a subsequence (which we will relabel as $\bu_m$) such that 
\begin{align}
    \label{BA_weakstar} \bu_m &\overset{\ast}{\rightharpoonup} \bu \text{ weak-$*$ in } L^\infty(0,T; H), \\
    \label{AL_weak}\bu_m &\rightharpoonup \bu \text{ weakly in } L^2(0,T; V), \\
    \label{AL_dt_weak}\dt\bu_m &\rightharpoonup \dt \bu \text{ weakly in } L^2(0,T; V').
\end{align}
Moreover, using the Aubin-Lions lemma one obtains another subsequence (still labelled as $\bu_m$) such that
\begin{align}
    \label{AL_strong} \bu_m &\to \bu \text{ strongly in } L^2(0,T; H).
\end{align}

Now we wish to show that passing to the limit in \eqref{gal_cNSE} yields $\eqref{cNSE_weak_eqn}$. 
Let $\bw \in V$, and set $\bv_m = \bu - \bu_m$.
We will show that $\bu$ is a solution to \eqref{cNSE_weak_eqn} by showing that 
\begin{align*}
    &\quad 
    \left\langle  \dt \bu, \bw \right\rangle +  
    \nu\lp \grad \bu, \grad\bw \rp + 
    b(\bze(\bu), \bu, \bw)  + \left\langle \bbf, \bw \right\rangle \\ &\quad- 
    \lp \dt \bu_m, \bw \rp - 
    \nu\lp\grad\bu_m, \bw \rp - 
    b\lp \bze(\bu_m), \bu_m, P_m\bw\rp - \lp P_m \bbf, \bw \rp 
\end{align*}
tends to $0$ as $m\rightarrow\infty$. This expression can be rewritten as follows.
  \begin{align*}
    &\quad   
    \left\langle  \dt \bv_m, \bw \right\rangle +  
    \nu\lp \grad \bv_m, \grad\bw \rp +
    b(\bze(\bu), \bv_m, \bw) \\ &\quad + 
    \lp 
    b(\bze(\bu),\bu_m, \bw) -
    b(\bze(\bu_m),\bu_m, \bw)
    \rp \\ &\quad + 
    b(\bze(\bu_m),\bu_m, \lp I-P_m\rp \bw) + 
    \left\langle (I - P_m)\bbf, \bw \right\rangle.
\end{align*}
Note that from \eqref{AL_weak} and \eqref{AL_dt_weak},
\[\lim_{m\to\infty} \int_0^T \left\langle \dt \bv_m, \bw \right\rangle + \nu\lp \grad \bv_m, \grad\bw \rp dt = 0 \] 
and 
\[
\lim_{m\to\infty} \int_0^T b(\bze(\bu), \bv_m, \bw) dt = 0
\]
by Lemma \ref{zeta_lem} and \eqref{AL_weak}. Now, using the Lipschitz property of $\bze$, H\"older's inequality, and the Gagliardo-Nirenberg-Sobolev inequality, we bound the fourth and fifth term as follows:
\begin{align}
    &\quad  \int_0^T \notag
    b(\bze(\bu),\bu_m, \bw) -
    b(\bze(\bu_m),\bu_m, \bw)
    dt \\ &\leq \notag
    \int_0^T
    \normLp{3}{\bv_m}
    \normLp{2}{\grad\bu_m}
    \normLp{6}{\bw} dt \\ &\leq \notag
    C \int_0^T
    \normLp{2}{\bv_m}^{\frac{1}{2}} 
    \normLp{2}{\grad \bv_m}^\frac{1}{2} 
    \normLp{2}{\grad\bu_m}
    \normLp{2}{\grad\bw}
    dt \\ &\leq 
    C \normLp{2}{\grad\bw}
    \| \bv_m \|^\frac{1}{2}_{L^2 L^2}
    \| \grad \bv_m \|^\frac{1}{2}_{L^2 L^2}
    \| \grad \bu_m \|_{L^2 L^2}.
    \notag
\end{align}
Therefore, since $\| \grad\bu_m \|_{L^2 L^2}$ and $\| \grad\bv_m \|_{L^2 L^2}$ are bounded and $\bv_m \to 0$ strongly,  
\begin{align}\notag
    \lim_{m \to \infty } \int_0^T \lp b(\bze(\bu),\bu_m, \bw) - b(\bze(\bu_m),\bu_m, \bw) \rp dt = 0
\end{align}
as a consequence of \eqref{AL_strong}. Finally, by \eqref{proj_est} we obtain 
\begin{align*}
    &\quad 
    \lim_{m\to\infty}\abs{\int_0^T b(\bze(\bu_m), \bu_m, (I-P_m)\bw) dt} \\ & \leq 
    \lim_{m\to \infty} \normLp{\infty}{\bze} \sup_{m\in\nN}{\norm{\bu_m}_{L^2V}}
    \lp  \lambda_m^{-1/2} \normHs{1}{\bw}\rp \\ &= 0
\end{align*}
and 
\begin{align*}
    \lim_{m\to\infty}\int_0^T \left\langle (I-P_m)\bbf, \bw \right\rangle dt = 0.
\end{align*}

Thus we deduce that a subsequence of solutions $\bu_m$ of \eqref{gal_cNSE} converges to a solution $\bu$ of \eqref{cNSE}. It remains to be shown that $\bu$ is continuous in time and satisfies the initial data. It is an immediate consequence of the Aubin-Lions Compactness Theorem (see, e.g., \cite[Corollary 7.3]{Robinson_2001}) that $\bu \in C([0,T]; L^2)$. To show that the initial data is satisfied, one carries out the procedure performed in, e.g., \cite{Enlow_Larios_Wu_2023_calming, Temam_2001_Th_Num}. \qedsymbol

\begin{remark}
    It is not known if weak solutions are unique for calmed NSE or calmed rNSE. Indeed, if $\bu_1$ and $\bu_2$ are weak solutions with same initial data $\bu_0$, one can write the difference equation $\eqref{Sys_uni}$ and obtain the energy equation \eqref{cNSE_uni1} as we do in the case of strong solutions. However, for weak solutions it does not seem possible to attain an upper bound for the term 
    $b (\bze(\bu_2) - \bze(\bu_1), \bu_2, \tu)$
    using the techniques seen in this paper.
\end{remark}

\section{Strong solutions}\label{sec_smooth_ini_data}

In this section we prove the first - and second - order regularity of weak solutions to the calmed NSE \eqref{cNSE_weak} for the purpose of showing that strong solutions are unique. To this end, we will apply the Aubin-Lions Compactness Theorem \cite[Corollary 7.3]{Robinson_2001}. 

\subsection{Proof of Theorem \ref{thm_cNSE_vel_reg}}
Here, we work formally, but the results can be made rigorous using the Galerkin procedure as in the proofs of the previous theorem. 
Suppose $\bu_0 \in V$ and $\bbf\in L^2(0,T;H)$ for some $T>0$. Taking the action of \eqref{cNSE_weak} with $A\bu$ and then using the Lions-Magenes Lemma, Young's inequality, and H\"older's inequality yields
\begin{align}
    \notag 
    & \quad \frac{1}{2}\frac{d}{dt}\normLp{2}{\grad \bu}^2 + 
    \nu \normLp{2}{A\bu}^2 =  
    b(\bze(\bu), \bu, A \bu)
    - \lp \bbf, A \bu \rp \\ \notag & \leq 
    \normLp{\infty}{\bze} \normLp{2}{\grad\bu} \normLp{2}{A\bu} + 
    \normLp{2}{\bbf}\normLp{2}{A\bu}\\ \notag & \leq 
    C_\nu \normLp{\infty}{\bze}^2 \normLp{2}{\grad\bu}^2 + 
    C_\nu \normLp{2}{\bbf}^2 +
    \frac{\nu}{2}\normLp{2}{A\bu}^2 
\end{align}
    Rearranging these terms yields the inequality
\begin{align}\label{cNSE_Hor1}
    \frac{d}{dt}\normLp{2}{\grad\bu}^2 + 
    \nu \normLp{2}{A\bu}^2 \leq 
    C_\nu \normLp{\infty}{\bze}^2 \normLp{2}{\grad\bu}^2 + 
    C_\nu \normLp{2}{\bbf}^2.
\end{align}
We now remove the viscosity term and apply Gr\"onwall's inequality to obtain for a.e. $t\in[0,T]$,
\begin{align} \label{cNSE_Greg0}
    \normLp{2}{\grad\bu(t)}^2 \leq 
    e^{C_\nu \normLp{\infty}{\bze}^2 t} \normLp{2}{\grad\bu_0}^2 +
    C_\nu \int_0^t e^{- C_\nu \normLp{\infty}{\bze}^2\lp s-t\rp} \normLp{2}{\bbf(s)}^2\,ds.
\end{align}
Thus $\bu \in L^\infty(0,T; V)$ whenever $\bu_0 \in V$ and $\bbf \in L^2(0,T;H)$.
Returning to \eqref{cNSE_Hor1}, we integrate in time to obtain 

\begin{align} \label{cNSE_Greg0p5}
    \nu\int_0^T \normLp{2}{A\bu}^2 dt 
    \leq 
    \normLp{2}{\grad\bu_0}^2 + 
    C_\nu \int_0^T \normLp{\infty}{\bze}^2
    \normLp{2}{\grad\bu}^2 + \normLp{2}{\bbf}^2 dt.
\end{align}
From estimates \eqref{cNSE_Greg0} and \eqref{cNSE_Greg0p5} we deduce that $\bu \in L^2(0,T; H^2\cap V)$.
It remains to be shown that $\dt\bu \in L^2(0,T; H)$. This follows immediately from the calculation below:

\begin{align*}
\int_0^T \normLp{2}{\dt\bu}^2 dt &=
\int_0^T \normLp{2}{\nu A \bu + B(\bze(\bu), \bu) + \bbf }^2 dt \\ &\leq
C\int_0^T \nu \normLp{2}{A\bu}^2 + 
\normLp{\infty}{\bze}^2\normLp{2}{\grad\bu}^2 + \normLp{2}{\bbf}^2
dt \\ &< \infty.
\end{align*}
Therefore $\dt\bu \in L^2(0,T; H)$. By the Aubin-Lions Compactness Theorem, we conclude that $\bu \in C([0,T]; V)$. 
\qedsymbol \vspace{3 mm}\\

We now proceed in showing the global existence and uniqueness of strong solutions. With the existence of such solutions already known from prior results, this theorem focuses on uniqueness and continuous dependence on initial data.
\subsection{Proof of Theorem \ref{thm_cNSE_vel_uni}}
From Theorems \ref{thm_cNSE_vel_exi}, \ref{thm_cNSE_vel_reg}, and from \eqref{deRham}, we deduce the existence of strong solutions to calmed NSE \eqref{cNSE} and calmed rNSE \eqref{cNSE_vor} satisfying the hypotheses of Definition \ref{cNSE_vel_strong_soln}. Suppose $\bu_1$ and $\bu_2$ are strong solutions with respective initial data $\bu_0^1, \bu_0^2 \in V$ and forcing term $\bbf \in L^2(0,T;H)$. Let $\tu = \bu_1 - \bu_2$ and $\tu_0  = \bu_0^1 - \bu_0^2$. When we take the difference of the two equations we obtain
\begin{align} \label{Sys_uni}
    \pd{\tu}{t} - \nu \lap \tu =  
    B( \bze(\bu_2) - \bze(\bu_1), \bu_2 ) - 
    B(\bze(\bu_1), \tu ).
\end{align}
We now take the inner-product with $\tu$, which yields 
\begin{align}\label{cNSE_uni1}
&\quad
    \frac{1}{2}\frac{d}{dt}\normLp{2}{\tu}^2 + 
    \nu \normLp{2}{\grad \tu}^2 
    \\&= \notag
    b( \bze(\bu_2) - \bze(\bu_1), \bu_2, \tu ) - 
    b(\bze(\bu_1), \tu, \tu).
\end{align}
For the first term, using H\"older's inequality, the Gagliardo-Nirenberg-Sobolev inequality, condition \ref{zeta_cond_Lip} of Definition \ref{zeta_def}, and Poincar\`e's inequality, one obtains
\begin{align*}
    &\quad \abs{b( \bze(\bu_2) - \bze(\bu_1), \bu_2, \tu )} \\ &\leq 
    \normLp{3}{\tu}\normLp{6}{\grad \bu_2}\normLp{2}{\tu} \\ &\leq 
    C \normLp{2}{\tu}^\frac{3}{2} \normLp{2}{\grad\tu}^\frac{1}{2}\normLp{2}{\lap\bu_2} \\ &\leq 
    C_\nu \normLp{2}{\lap\bu_2}^\frac{4}{3}  
    \normLp{2}{\tu}^2 + 
    \frac{\nu}{4}\normLp{2}{\grad\tu}^2.
\end{align*}
While for the second term, one obtains
\begin{align*}
    \abs{b(\bze(\bu_1), \tu, \tu)} \leq 
    C_\nu \normLp{\infty}{\bze}^2 \normLp{2}{\tu}^2 + 
    \frac{\nu}{4}\normLp{2}{\grad\tu}^2.
\end{align*}
Inserting these bounds into estimate \eqref{cNSE_uni1} then yields 
\begin{align}
    \frac{d}{dt}\normLp{2}{\tu}^2 + 
    \nu \normLp{2}{\grad \tu}^2 \leq 
    C_\nu \lp 
    \normLp{\infty}{\bze}^2 + \normLp{2}{\lap \bu_2}^\frac{4}{3}
    \rp \normLp{2}{\tu}^2.
\end{align}
Since $\bu_2$ is a strong solution to calmed NSE \eqref{cNSE} we have the containment $\bu_2 \in L^2(0,T; H^2\cap V)$, hence
\[
A(T) := C_\nu \int_0^T \lp \normLp{\infty}{\bze}^2 + \normLp{2}{\lap \bu_2}^\frac{4}{3} \rp dt < \infty.
\]
Using Gr\"onwall's inequality, it follows that,
\begin{align}\label{cNSE_uni2}
    \normLp{2}{\tu(t)}^2 \leq e^{A(T)}\normLp{2}{\tu_0}^2.
\end{align}
We conclude that strong solutions to \eqref{cNSE} are unique and depend continuously on initial data. \qedsymbol

\section{Convergence to strong solutions of the Navier-Stokes equations}\label{sec_convergence}
In this section we prove that strong solutions $\bu^\epsilon$ to calmed NSE will converge to a strong solution $\bu$ to NSE on sufficiently small time intervals when $\bze$ is known to satisfy condition \ref{zeta_cond_conv} for some minimal value $\beta \geq 1$. 

\subsection{Proof of Theorem \ref{thm_cNSE_conv}}
    Set $\bw^\epsilon = \bu - \bu^\epsilon$. We then take the action of the difference of \eqref{NSE} and \eqref{cNSE} with $A\bw^\epsilon$ and use the Lions-Magenes Lemma to obtain 
    \begin{align} \label{conv_diffEQ}
        \frac{1}{2}\frac{d}{dt}\normLp{2}{\grad\bw^\epsilon}^2 + 
        \nu \normLp{2}{A\bw^\epsilon}^2 = N,
    \end{align}
    where the nonlinearity $N$ is rewritten as 
        \begin{align*}
            N &= \quad 
            b(\bze(\bu) - \bu, \bu, A \bw^\epsilon) -            
            b(\bze(\bu), \bw^\epsilon, A \bw^\epsilon)
            \\ &\quad -
            b(\bze(\bu) - \bze(\bu^\epsilon), \bu, A \bw^\epsilon)
            +
            b(\bze(\bu) -\bze(\bu^\epsilon), \bw^\epsilon, A \bw^\epsilon )
            \\ &= \quad N_1 + N_2 +N_3 + N_4.
    \end{align*}

    For $N_1$, we use condition \ref{zeta_cond_conv} of Definition \ref{zeta_def}, Agmon's inequality, and Young's inequality to obtain
    \begin{align} \label{n1}
        \abs{N_1} 
        &\leq \int_\Omega \abs{\bze(\bu) - \bu} 
        \abs{\grad\bu}\abs{A\bw^\epsilon}\, d\bx \\ &\leq \notag
        C\epsilon^\alpha \int_\Omega \abs{\bu}^\beta \abs{\grad\bu}\abs{A \bw^\epsilon}\, d\bx \\ &\leq \notag
         C\epsilon^\alpha \normLp{\infty}{\bu}^\beta 
         \normLp{2}{\grad \bu} \normLp{2}{A\bw^\epsilon} \\ &\leq \notag
         C\epsilon^\alpha 
         \|\bu\|_{L^\infty V} 
          \normLp{2}{\lap\bu}^\beta
         \normLp{2}{A\bw^\epsilon} \\ &\leq \notag
         C_\nu \| \bu\|_{L^\infty V}^2 \normLp{2}{\lap\bu}^{2\beta} \epsilon^{2\alpha} + 
         \frac{\nu}{8}\normLp{2}{A\bw^\epsilon}^2.
    \end{align}
    For the remaining terms, we use a combination of Agmon's inequality, Poincare's inequality, H\"older's inequality, the Gagliardo-Nirenberg-Sobolev inequality, and Young's inequality. For $N_2$, we obtain
    \begin{align}\label{n2}
        \abs{N_2} &\leq \int_\Omega \abs{\bu}\abs{\grad\bw^\epsilon}\abs{A\bw^\epsilon}\, d\bx \\ &\leq \notag
        \normLp{\infty}{\bu}\normLp{2}{\grad\bw^\epsilon}\normLp{2}{A\bw^\epsilon} \\ &\leq \notag
        C \normLp{2}{\lap\bu}\normLp{2}{\grad\bw^\epsilon}\normLp{2}{A\bw^\epsilon} \\ &\leq \notag
        C_{\nu}\normLp{2}{\lap\bu}^2 \normLp{2}{\grad\bw^\epsilon}^2 + 
        \frac{\nu}{8}\normLp{2}{A\bw^\epsilon}^2,
    \end{align}
    where we use the additional fact that $\abs{\bze(\bu)}\leq \abs{\bu}$, which follows from conditions \ref{zeta_cond_Lip} and \ref{zeta_cond_conv} of Definition \ref{zeta_def}. For $N_3$,
    \begin{align}\label{n3}
        \abs{N_3} &\leq \int_\Omega \abs{\bw^\epsilon}\abs{\grad\bu}\abs{A\bw^\epsilon}\, d\bx \\ &\leq \notag
        \normLp{3}{\grad\bu}\normLp{6}{\bw^\epsilon}\normLp{2}{A\bw^\epsilon} \\ &\leq \notag
        C \normLp{2}{\grad\bu}^\frac{1}{2}\normLp{2}{\lap\bu}^\frac{1}{2}\normLp{2}{\grad\bw^\epsilon}\normLp{2}{A\bw^\epsilon} \\ &\leq \notag
        C_\nu \| \bu \|_{L^\infty V} \normLp{2}{\lap \bu} 
        \normLp{2}{\grad\bw^\epsilon}^2 + \frac{\nu}{8}\normLp{2}{A\bw^\epsilon}^2
    \end{align}
    and for $N_4$, using inequality \eqref{Stokes_domain} we deduce
    \begin{align}\label{n4}
        \abs{N_4} &\leq \int_\Omega \abs{\bw^\epsilon}\abs{\grad\bw^\epsilon}\abs{A\bw^\epsilon}\, d\bx \\ &\leq \notag
        \normLp{6}{\bw^\epsilon}\normLp{3}{\grad\bw^\epsilon}\normLp{2}{A\bw^\epsilon} \\ &\leq \notag
        \normLp{2}{\grad\bw^\epsilon}^\frac{3}{2}\normHs{2}{\bw^\epsilon}^\frac{1}{2}\normLp{2}{A\bw^\epsilon}\\ &\leq \notag
        C\normLp{2}{\grad\bw^\epsilon}^\frac{3}{2}\normLp{2}{A\bw^\epsilon}^\frac{3}{2} \\ &\leq \notag
        C_\nu \normLp{2}{\grad\bw^\epsilon}^6 + 
        \frac{\nu}{8}\normLp{2}{A\bw^\epsilon}^2.
    \end{align}
    We now make the ansatz 
    \begin{align} \label{conv_ansatz}
        \normLp{2}{\grad\bw^\epsilon} < 1,
    \end{align}
    which holds at the initial time by assumption and therefore for a short time since $\bu, \bu^\epsilon \in C([0,T];V)$. We want to show that this leads to an even tighter bound. To this end, we apply \eqref{conv_ansatz} to estimate \eqref{n4}, then insert the bounds \eqref{n1}, \eqref{n2}, \eqref{n3}, and \eqref{n4} into estimate \eqref{conv_diffEQ} which yields
    \begin{align} \label{cNSE_conv2}
        &\quad \frac{d}{dt}\normLp{2}{\grad\bw^\epsilon}^2 + 
        \nu \normLp{2}{ A \bw^\epsilon}^2 \\ &\leq \notag
        C_\nu \|\bu\|_{L^\infty V}^2 \normLp{2}{\lap\bu}^{2\beta} \epsilon^{2\alpha} + 
        C_\nu 
        \lp 
        \normLp{2}{\lap\bu}^2 +
        \|\bu\|_{L^\infty V}
        \normLp{2}{\lap\bu} + 1
        \rp \normLp{2}{\grad\bw^\epsilon}^2.
    \end{align}
    By \eqref{NSE_reg_cond} we deduce that the first term and the factor preceding $\normLp{2}{\grad\bw^\epsilon}^2$ in \eqref{cNSE_conv2} are integrable in time.
    Since $\normLp{2}{\grad\bw^\epsilon(0)} = 0$, we can use Gr\"onwall's inequality to obtain, for all $t\in [0,T]$,
    \begin{align}\label{cNSE_conv3}
        \normLp{2}{\grad\bw^\epsilon(t)} \leq K \epsilon^\alpha,
    \end{align}
    where $K > 0$ is a constant depending on 
    $\Omega, \nu, \beta, \| \bu \|_{L^\infty V}, \normLp{2}{\lap\bu}$, and $T$. By taking $\epsilon > 0$ sufficiently small, it follows that 
    \[
    \normLp{2}{\grad\bw^\epsilon(t)} < \frac{1}{2}
    \]
    for all $t\in [0,T]$.
    After applying a standard bootstrapping argument (see, e.g., \cite{Enlow_Larios_Wu_2023_calming}), we conclude that inequality \eqref{cNSE_conv3} is valid for all $t\in [0,T]$. \qedsymbol

\section{Existence of Strong Solutions to 3D Navier-Stokes}\label{sec_crNSE_cauchy}
To prove Theorem \ref{thm_cNSE_Cauchy}, we begin with a lemma establishing higher-order bounds that are independent of the calming parameter.  We assume a uniform-in-time bound on $\bbf$, namely $\bbf\in L^\infty((0,\infty);L^2)$. This hypothesis could likely be weakened, but simplicity of presentation, we do not pursue this here.
\begin{lemma}\label{lem_grad_bd}
    Let $\nu>0$.  Suppose, for each $\epsilon > 0$, $\bu^\epsilon$ is a strong solution to calmed NSE \eqref{cNSE} or calmed rNSE \eqref{cNSE_vor} with initial data $\bu_0 \in V$ and $\bbf\in L^\infty((0,\infty);L^2)$. On the interval $[0, T_0]$, where 
    \begin{align}\label{lem_time}
        T_0 := \frac{\lp \normLp{2}{\grad\bu_0}^2 + M^2 \rp^{-2} - 
        \frac{1}{4}\normLp{2}{\grad\bu_0}^{-4}} {C_\nu}
    \end{align}
    and $M:= \|\bbf\|_{L^\infty((0,\infty);L^2)}^\frac{1}{3}$,
    the following inequalities are valid:
    \begin{align}\label{lem_ineq1}
        \sup_{\epsilon >0} \sup_{t\in [0,T_0]} \normLp{2}{\grad\bu^\epsilon(t)}^2 \leq 2\normLp{2}{\grad\bu_0}^2
    \end{align}
    and 
    \begin{align}\label{lem_ineq2}
        \sup_{\epsilon >0}\left\{\nu \int_0^{T_0} \normLp{2}{A\bu^\epsilon}^2 \right\} \leq 
        \normLp{2}{\grad\bu_0}^2 + 
        C_\nu 
        T_0 
        \lp
        2
        \normLp{2}{\grad\bu_0}^2
        + 
        M^2
        \rp^3,
    \end{align}
    where $C_\nu$ is a positive constant that depends on the domain $\Omega$ and $\nu$ and may change from line to line.
\end{lemma}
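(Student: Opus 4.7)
The plan is to derive an $\epsilon$-independent energy inequality at the $V$-level for $\bu^\epsilon$, reduce it to a scalar ODE of the form $y' \leq C_\nu (y + M^2)^3$, and then read off the two stated estimates from the explicit blow-up time of that ODE. The single non-routine observation that makes the whole argument work is that the calming bound must be performed \emph{without} invoking $\|\bze\|_{L^\infty}$, since that norm scales like $\epsilon^{-1}$ and would destroy the uniformity in $\epsilon$ that the conclusion requires. Instead, I would use only the pointwise bound
\[
|\bze(\bx)| \leq |\bx|,
\]
which follows from condition \ref{zeta_cond_Lip} of Definition \ref{zeta_def} once one notes that condition \ref{zeta_cond_conv} forces $\bze(\boldsymbol{0}) = \boldsymbol{0}$ (take $\bx = \boldsymbol{0}$ in \eqref{zeta_pwise_conv}).

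Working formally (the argument is made rigorous via Galerkin as in the proofs above), I would take the action of the equation with $A\bu^\epsilon$, use the Lions--Magenes lemma, and estimate the nonlinear term as
\[
|b(\bze(\bu^\epsilon), \bu^\epsilon, A\bu^\epsilon)| \leq \|\bze(\bu^\epsilon)\|_{L^6}\,\|\nabla \bu^\epsilon\|_{L^3}\,\|A\bu^\epsilon\|_{L^2} \leq C\,\|\bu^\epsilon\|_{L^6}\,\|\nabla \bu^\epsilon\|_{L^3}\,\|A\bu^\epsilon\|_{L^2},
\]
then apply Sobolev embedding ($\|\bu^\epsilon\|_{L^6} \leq C\|\nabla \bu^\epsilon\|_{L^2}$) and the Gagliardo--Nirenberg interpolation $\|\nabla \bu^\epsilon\|_{L^3} \leq C\|\nabla\bu^\epsilon\|_{L^2}^{1/2}\|A\bu^\epsilon\|_{L^2}^{1/2}$ (using \eqref{Stokes_domain}), so that the nonlinearity is controlled by $C\|\nabla \bu^\epsilon\|_{L^2}^{3/2}\|A\bu^\epsilon\|_{L^2}^{3/2}$. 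This is \emph{identical in structure} to the classical NSE $H^1$ estimate, and for the rotational form \eqref{cNSE_vor} one replaces $\|\nabla \bu^\epsilon\|_{L^3}$ by $\|\nabla\times\bu^\epsilon\|_{L^3}$, which is controlled in exactly the same way. Young's inequality to the forcing and nonlinear terms absorbs $\frac{\nu}{2}\|A\bu^\epsilon\|_{L^2}^2$ into the diffusion and produces the $\epsilon$-free differential inequality
\[
\frac{d}{dt}\|\nabla \bu^\epsilon\|_{L^2}^2 + \nu \|A\bu^\epsilon\|_{L^2}^2 \leq C_\nu \|\nabla \bu^\epsilon\|_{L^2}^6 + C_\nu \|\bbf\|_{L^2}^2.
\]

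Setting $y(t) := \|\nabla \bu^\epsilon(t)\|_{L^2}^2$ and using $\|\bbf\|_{L^2}^2 \leq M^6$, together with the elementary inequality $y^3 + M^6 \leq (y + M^2)^3$, I would reduce this to $(y + M^2)' \leq C_\nu (y + M^2)^3$. Separating variables and integrating from $0$ to $t$ gives
\[
\frac{1}{(y(t) + M^2)^2} \geq \frac{1}{(y(0) + M^2)^2} - 2C_\nu t,
\]
so the condition $y(t) \leq 2 y(0)$ holds precisely when the right-hand side is at least $(2y(0) + M^2)^{-2}$, equivalently (after using $(2y(0)+M^2)^{-2} \leq \tfrac{1}{4}y(0)^{-2}$) for all $t \leq T_0$ with $T_0$ as in \eqref{lem_time}. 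This is the first inequality \eqref{lem_ineq1}, uniform in $\epsilon$ because $C_\nu$ is $\epsilon$-free.

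For the second inequality \eqref{lem_ineq2}, I would integrate the differential inequality on $[0, T_0]$, drop the nonnegative boundary term $y(T_0)$, and bound $\int_0^{T_0}(y(t) + M^2)^3\,dt$ using the pointwise estimate $y(t) + M^2 \leq 2y(0) + M^2$ just established. This yields
\[
\nu \int_0^{T_0} \|A\bu^\epsilon\|_{L^2}^2\,dt \leq \|\nabla \bu_0\|_{L^2}^2 + C_\nu T_0 \,(2\|\nabla\bu_0\|_{L^2}^2 + M^2)^3,
\]
which is exactly \eqref{lem_ineq2}. The only genuinely delicate step is the very first one, ensuring that the Lipschitz constraint on $\bze$ alone (without any appeal to $\|\bze\|_{L^\infty}$) suffices to close the estimate; everything else is the standard short-time $H^1$ blow-up analysis for 3D NSE transplanted to the calmed setting.
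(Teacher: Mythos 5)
Your proposal is correct and follows essentially the same route as the paper: the $H^1$-level estimate against $A\bu^\epsilon$, the pointwise bound $|\bze(\bx)|\leq|\bx|$ in place of $\|\bze\|_{L^\infty}$ to keep the estimate uniform in $\epsilon$, the Gagliardo--Nirenberg/Young reduction to $\frac{d}{dt}\eta\leq C_\nu\eta^3$ with $\eta=\|\nabla\bu^\epsilon\|_{L^2}^2+M^2$, and the explicit integration of that ODE up to the time $T_0$, followed by time-integration of the differential inequality for \eqref{lem_ineq2}. You are in fact slightly more careful than the paper in justifying $|\bze(\bx)|\leq|\bx|$ (the paper silently replaces $\bze(\bu^\epsilon)$ by $\bu^\epsilon$ in the trilinear term, invoking this fact only in an earlier section), so no gaps.
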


\begin{proof}
    Following similar steps as before in showing higher-order regularity, we take the action of \eqref{cNSE_vor} on $A\bu^\epsilon$, integrate by parts, and apply the Lions-Magenes Lemma, to obtain 
    \begin{align}\notag
        \frac{1}{2}\frac{d}{dt}\normLp{2}{\grad\bu^\epsilon}^2 + \nu \normLp{2}{A\bu^\epsilon}^2 = 
        b(\bu^\epsilon, \bu^\epsilon, A \bu^\epsilon) + \lp \bbf, A\bu^\epsilon \rp 
    \end{align}
    Now we use the Gagliardo-Nirenberg-Sobolev inequality, the Cauchy-Schwarz inequality, \eqref{Stokes_domain}, and Young's inequality, which yields
    \begin{align}
        &\quad \notag
        \frac{1}{2}\frac{d}{dt}\normLp{2}{\grad\bu^\epsilon}^2 + 
        \nu \normLp{2}{A \bu^\epsilon}^2 \\ &\leq \notag
        \normLp{3}{\grad \bu^\epsilon}
        \normLp{6}{\bu^\epsilon}
        \normLp{2}{A \bu} + 
        \normLp{2}{\bbf}\normLp{2}{A\bu^\epsilon}
        \\ &\leq \notag 
        C \normLp{2}{\grad \bu^\epsilon}^\frac{3}{2}
        \normHs{2}{\bu^\epsilon}^\frac{1}{2}
        \normLp{2}{A \bu^\epsilon} + 
        \normLp{2}{\bbf}\normLp{2}{A\bu^\epsilon}
        \\ &\leq \notag 
        C \normLp{2}{\grad \bu^\epsilon}^\frac{3}{2}
        \normLp{2}{A \bu^\epsilon}^\frac{3}{2} +
        \normLp{2}{\bbf}\normLp{2}{A\bu^\epsilon} \\ &\leq \notag 
        C_{\nu} \normLp{2}{\grad\bu^\epsilon}^6 + 
        C_\nu \normLp{2}{\bbf}^2 + 
        \frac{\nu}{2}\normLp{2}{A\bu^\epsilon}^2 \\ &\leq \notag 
        C_{\nu} \normLp{2}{\grad\bu^\epsilon}^6 + 
        C_\nu M^6 + 
        \frac{\nu}{2}\normLp{2}{A\bu^\epsilon}^2 \\ &\leq \notag
        C_\nu 
        \lp \normLp{2}{\grad\bu^\epsilon}^2 + 
        M^2 \rp^3 + 
        \frac{\nu}{2}\normLp{2}{A\bu^\epsilon}^2.
    \end{align}
    We now rewrite this inequality as 
    \begin{align}\label{lem_grad_bd_1}
        \frac{d}{dt}\normLp{2}{\grad\bu^\epsilon}^2 + 
        \nu\normLp{2}{A\bu^\epsilon}^2 \leq C_\nu 
        \lp \normLp{2}{\grad\bu^\epsilon}^2 + 
        M^2 \rp^3
    \end{align}
    which, after making the substitution $\eta  = \normLp{2}{\grad\bu^\epsilon}^2 + M^2$ and removing the diffusive terms, becomes
    \begin{align}\notag
        \frac{d}{dt}\eta \leq C_\nu \eta^3. 
    \end{align}
    From this inequality we derive, for all $t\in[0, T_0]$,
    \begin{align}\notag
        \eta(t) \leq 
        \lp 
        \eta(0)^{-2} - C_\nu T_0
        \rp^{- \frac{1}{2}}
    \end{align}
    or 
    \begin{align}\label{lem_grad_bd_2}
        \normLp{2}{\grad\bu^\epsilon}^2 + 
        M^2 \leq 
        \lp 
        \lp
        \normLp{2}{\grad\bu_0}^2 + 
        M^2 
        \rp^{-2} - 
        C_\nu T_0
        \rp^{- \frac{1}{2}} = 
        2\normLp{2}{\grad\bu_0}^2
    \end{align}
    for $T_0$ as in \eqref{lem_time}, thus proving \eqref{lem_ineq1}.
    We now return to estimate \eqref{lem_grad_bd_1}, integrate in time on the interval $[0,T_0]$, and apply estimate \eqref{lem_grad_bd_2} to obtain
\begin{align}
        &\quad \notag
        \nu \int_0^{T_0} \normLp{2}{A \bu^\epsilon}^2 dt \\ &\leq \notag
        \normLp{2}{\grad\bu_0}^2 +
         C_\nu \int_0^{T_0} 
         \lp 
        \normLp{2}{\grad\bu^\epsilon}^2 + M^2 \rp^3
        dt \\ &\leq \notag
        \normLp{2}{\grad\bu_0}^2 + 
        C_\nu T_0 \lp 
        2\normLp{2}{\grad\bu_0}^2 + M^2 \rp^3.
\end{align}
This proves \eqref{lem_ineq2}.
\end{proof}

Our lemma guarantees that for nonzero initial data $\bu_0 \in V$ and forcing term $\bbf\in L^\infty(0,\infty; L^2)$, there exists a positive time $T_0$ for which, given any $\epsilon > 0$, $\bu^\epsilon$ is bounded in $L^\infty(0,T_0; V) \cap L^2(0, T_0; H^2\cap V)$. We now, proceed to show that, on the time interval $[0,T_0]$, $\left\{ \bu^\epsilon \right\}_{\epsilon >0}$ is Cauchy. 

\subsection{Proof of Theorem \ref{thm_cNSE_Cauchy}}
Let $\bu^\epsilon$ and $\bu^\delta$ be strong solutions to calmed NSE \eqref{cNSE} or calmed rNSE \eqref{cNSE_vor} with initial data $\bu_0 \in V$ and with respective calming parameters $\epsilon >0$ and $\delta > 0$. From the results of Lemma \ref{lem_grad_bd} we ascertain the existence of a maximal time $T > 0$ for which
\begin{align}\label{cauchy1}
    \sup_{\epsilon >0} \sup_{t\in [0,T]} \normLp{2}{\grad\bu^\epsilon(t)}^2 \leq 2 \normLp{2}{\grad\bu_0}^2.
\end{align}
From Lemma \ref{lem_grad_bd} we ascertain that $T \geq \frac{1}{4}\normLp{2}{\grad\bu_0}^{-4} > 0$.
Set $\tu = \bu^\epsilon - \bu^\delta$. The system for $\tu$ can be written as  
\begin{align} \notag
&\quad
    \dt \tu + \nu A\tu  
    \\&=  - \notag
    B(\bzd(\bu^\delta), \tu) -
    B(\tu, \bu^\epsilon) +
    B(\bzd(\bu^\delta) - \bu^\delta,\bu^\epsilon) +
    B(\bu^\epsilon - \bze(\bu^\epsilon), \bu^\epsilon)
\end{align}
We then take the inner product with $\tu$ to obtain 
\begin{align}\notag
    \frac{1}{2}\frac{d}{dt}\normLp{2}{\tu}^2 + 
    \nu \normLp{2}{\grad\tu}^2 &\leq  
    \int_\Omega \abs{\grad\tu}\abs{\bzd(\bu^\delta)}\abs{\tu}\, d\bx 
    \\ &+ \notag
    \int_\Omega \abs{\tu}^2 \abs{\grad\bu^\epsilon} \, d\bx \\ &+ \notag
    \int_\Omega \abs{\grad\bu^\epsilon}\abs{\bzd(\bu^\delta) - \bu^\delta}\abs{\tu}\, d\bx \\ &+ \notag 
    \int_\Omega \abs{\grad\bu^\epsilon}\abs{\bze(\bu^\epsilon) - \bu^\epsilon}\abs{\tu}\, d\bx.
\end{align}
Now, applying condition \ref{zeta_cond_conv} of $\bze$ yields 
\begin{align} \notag
    \frac{1}{2}\frac{d}{dt}\normLp{2}{\tu}^2 + 
    \nu \normLp{2}{\grad\tu}^2 \leq  
    \int_\Omega \abs{\grad\tu}\abs{\bu^\delta}\abs{\tu}\, d\bx +
    \int_\Omega \abs{\tu}^2 \abs{\grad\bu^\epsilon} \, d\bx 
    \\ \qquad + \notag 
    C\delta^\alpha \int_\Omega \abs{\grad\bu^\epsilon}\abs{\bu^\delta}^\beta\abs{\tu}\, d\bx +
    C\epsilon^\alpha\int_\Omega \abs{\grad\bu^\epsilon}\abs{\bu^\epsilon}^\beta\abs{\tu}\, d\bx.
\end{align}
Using H\"older's inequality, Agmon's inequality, Poincar\'e's inequality, \eqref{Stokes_domain}, and Young's inequality, for the first term we obtain 
\begin{align}\label{cauchy2}
    \int_\Omega \abs{\grad\tu}\abs{\bzd(\bu^\delta)}\abs{\tu}\, d\bx &\leq 
    \normLp{2}{\grad\tu} \normLp{\infty}{\bu^\delta}\normLp{2}{\tu} \\ &\leq \notag
    C\normLp{2}{\grad\tu} \normHs{2}{\bu^\delta}\normLp{2}{\tu} \\ &\leq \notag
    C\normLp{2}{\grad\tu} \normLp{2}{A\bu^\delta}\normLp{2}{\tu} \\ &\leq \notag
    C_\nu \normLp{2}{A \bu^\delta}^2 \normLp{2}{\tu}^2 + \frac{\nu}{8}\normLp{2}{\grad\tu}^2.
\end{align}
and similarly for the second term, using also \eqref{lem_ineq1},
\begin{align}\label{cauchy2p5}
    \int_\Omega \abs{\tu}^2 \abs{\grad\bu^\epsilon} \, d\bx
    &\leq 
    \normLp{3}{\tu}
    \normLp{2}{\grad\bu^\epsilon}
    \normLp{6}{\tu} 
    \\ &\leq \notag  
    C\normLp{2}{\tu}^\frac{1}{2}
    \normLp{2}{\grad\bu_0}
    \normLp{2}{\grad\tu}^\frac{3}{2} 
    \\ &\leq \notag
    C_{\nu}\normLp{2}{\grad\bu_0}^4
    \normLp{2}{\tu}^2 + 
    \frac{\nu}{8}
    \normLp{2}{\grad\tu}^2
\end{align}
Using the same inequalities for the third term, we deduce that 
\begin{align}\notag
    C\delta^\alpha \int_\Omega \abs{\grad\bu^\epsilon}\abs{\bu^\delta}^\beta\abs{\tu}\, d\bx 
    &\leq 
    C\delta^\alpha 
    \normLp{6}{\grad\bu^\epsilon}
    \normLp{2\beta}{\bu^\delta}^\beta
    \normLp{3}{\tu} \\ &\leq \notag
    C\delta^\alpha 
    \normHs{2}{\bu^\epsilon}
    \normLp{2\beta}{\bu^\delta}^\beta
    \normLp{2}{\tu}^\frac{1}{2} 
    \normLp{2}{\grad\tu}^\frac{1}{2} \\ &\leq \notag
    C\delta^\alpha 
    \normLp{2}{A\bu^\epsilon}
    \normLp{2\beta}{\bu^\delta}^\beta
    \normLp{2}{\tu}^\frac{1}{2} 
    \normLp{2}{\grad\tu}^\frac{1}{2}.
\end{align}
For $\beta \in [1,3]$, from the Gagliardo-Nirenberg-Sobolev inequality and \eqref{cauchy1} we have
\begin{align}\notag
    \normLp{2\beta}{\bu^\delta}^\beta \leq C_\beta \normLp{2}{\grad\bu_0}^\beta.
\end{align}
We insert this bound into estimate \eqref{cauchy2} and apply Young's inequality to obtain
\begin{align}\label{cauchy3}
    &\quad C\delta^\alpha \int_\Omega \abs{\grad\bu^\epsilon}\abs{\bu^\delta}^\beta\abs{\tu}\, d\bx 
    \\ &\leq \notag
    C_{\beta, \nu}\delta^{2\alpha}\normLp{2}{\grad\bu_0}^{2\beta}\normLp{2}{A\bu^\epsilon}^2 + 
    C_\nu \normLp{2}{\tu}^2 + \frac{\nu}{8}\normLp{2}{\grad\tu}^2.
\end{align}
We follow the same procedure for the final term:
\begin{align}\label{cauchy4}
    &\quad C\epsilon^\alpha \int_\Omega \abs{\grad\bu^\epsilon}\abs{\bu^\epsilon}^\beta\abs{\tu}\, d\bx 
    \\ &\leq \notag
    C_{\beta, \nu}\epsilon^{2\alpha}\normLp{2}{\grad\bu_0}^{2\beta}\normLp{2}{A\bu^\epsilon}^2 + 
    C_\nu \normLp{2}{\tu}^2 + \frac{\nu}{8}\normLp{2}{\grad\tu}^2.
\end{align}
Invoking \eqref{cauchy2}, \eqref{cauchy2p5}, \eqref{cauchy3}, and \eqref{cauchy4} yields the upper bound
\begin{align}\label{cauchy5}
    &\quad \frac{d}{dt}\normLp{2}{\tu}^2 + 
    \normLp{2}{\grad\tu}^2 
    \\ &\leq \notag
    C_\nu \lp 
    \normLp{2}{A\bu^\delta}^2 + 
    \normLp{2}{\grad\bu_0}^4
    + 1 \rp \normLp{2}{\tu}^2 + 
    C_{\beta,\nu} 
    \normLp{2}{\grad\bu_0}^{2\beta}
    \normLp{2}{A\bu^\epsilon}^2
    \lp 
    \delta^{2\alpha} + \epsilon^{2\alpha}
    \rp \\ &\leq \notag 
    K_1\normLp{2}{\tu}^2 + 
    K_2 \lp 
    \delta^{2\alpha} + \epsilon^{2\alpha}
    \rp,
\end{align}
where $K_1$ and $K_2$ depend on $\nu, \beta, T$ and $\normLp{2}{\grad\bu_0}$, but not $\epsilon$ or $\delta$, and are determined by Lemma \ref{lem_grad_bd}. Now, we apply Gr\"onwall's inequality to obtain, for all $t \in [0,T]$,
\begin{align}
    \normLp{2}{\tu(t)} \leq 
    K_3\lp \delta^{2\alpha} + \epsilon^{2\alpha}\rp,
\end{align}
where 
\[
K_3 = \frac{K_2}{K_1}\lp e^{K_1 T} - 1\rp
\lp \delta^{2\alpha} + \epsilon^{2\alpha}\rp.
\]
Therefore we see that $\displaystyle\lim_{\delta, \epsilon \to 0} \normLp{2}{\bu^\epsilon - \bu^\delta} = 0$, hence $\{\bu^\epsilon\}_{\epsilon > 0}$ is Cauchy in $L^\infty H$ with respect to the calming parameter. If instead we integrate \eqref{cauchy5} on $[0,T]$, we can derive the upper bound 
\begin{align}
     \nu \int_0^T \normLp{2}{\grad\tu}^2 dt  & \leq 
    K_1 \int_0^T \normLp{2}{\tu}^2 dt + 
    K_2 T \lp \delta^{2\alpha} + \epsilon^{2\alpha}\rp \\ & \leq \notag 
    K_1 T \| \tu  \|_{L^\infty L^2} +
    K_2 T \lp \delta^{2\alpha} + \epsilon^{2\alpha}\rp ,
\end{align}
hence $\{\bu^\epsilon\}_{\epsilon > 0}$ is also Cauchy in $L^2 V$. 
Therefore, there exists $\bu \in L^\infty H \cap L^2 V$ for which 
\begin{align}\label{strong_eps_conv}
\bu^\epsilon \to \bu \text{ strongly in }\bu \in L^\infty H \cap L^2 V
\end{align}
 as $\epsilon \to 0$. We now show that this limit point $\bu$ is in fact a solution to 3D rNSE \eqref{NSE_vor}. First note that, owing to Lemma \ref{lem_grad_bd}, the equivalence \eqref{Stokes_domain}, the Banach-Alaoglu Theorem, and the usual uniqueness of limits, it follows that
\begin{align}\label{NSE_target_strong_spaces}
    \bu\in L^\infty V \cap L^2 (H^2\cap V).
\end{align}
 Set $\bu^* = \bu^\epsilon - \bu$, and take the action of \eqref{NSE_vor} against an arbitrary test function $\bw\in C^1_c([0,T);V)$ and integrate by parts in time (noting that $\bw\big|_{t=T}=\mathbf{0}$),
\begin{align*}
    &\quad 
    -\int_0^T\left\langle  \bu^\epsilon, \dt \bw \right\rangle\,dt +  
    \nu\int_0^T\lp \grad \bu^\epsilon, \grad\bw \rp\,dt + 
    \int_0^T b(\bze(\bu^\epsilon), \bu^\epsilon, \bw)\,dt  
    \\&= \left\langle  \bu_0, \bw(0) \right\rangle+\int_0^T\left\langle \bbf, \bw \right\rangle\,dt.
\end{align*}
Thanks to \eqref{strong_eps_conv}, the first two terms converge to their Navier-Stokes analogues.  For the nonlinear term, we estimate
\begin{align*}
&\quad
\left|\int_0^T b(\bze(\bu^\epsilon), \bu^\epsilon, \bw)\,dt
-
\int_0^T b(\bu, \bu, \bw)\,dt\right|
\\&\leq
\int_0^T |b(\bze(\bu^\epsilon)-\bu^\epsilon, \bu^\epsilon, \bw)|\,dt
+
\int_0^T |b(\bu^*, \bu^\epsilon, \bw)|\,dt
+
\int_0^T|b(\bu, \bu^*, \bw)|\,dt
\\ &\leq 
\int_0^T\Bigg(
C\epsilon^\alpha 
\normLp{2\beta}{\bu^\epsilon}^\beta 
\normLp{3}{\grad \bu^\epsilon}
\normLp{6}{\bw} \\ &\qquad\qquad+ 
\normLp{3}{\bu^*}
\normLp{2}{\grad\bu^\epsilon}
\normLp{6}{\bw}  + 
\normLp{3}{\bu}
\normLp{2}{\grad\bu^*}
\normLp{6}{\bw} \Bigg)dt
\\ &\leq 
\int_0^T\Bigg(
C_\beta \epsilon^\alpha 
\normLp{2}{\grad\bu_0}^\beta
\normLp{2}{\grad \bu^\epsilon}^\frac{1}{2}
\normLp{2}{A \bu^\epsilon}^\frac{1}{2}
\normLp{2}{\grad\bw} \\ &\qquad\qquad+ 
\normLp{2}{\bu^*}^\frac{1}{2} 
\normLp{2}{\grad\bu^*}^\frac{1}{2}
\normLp{2}{\grad\bu^\epsilon}
\normLp{2}{\grad\bw}\,  \\ &\qquad\qquad\quad+ 
\int_0^T
\normLp{2}{\bu}^\frac{1}{2}
\normLp{2}{\grad\bu}^\frac{1}{2}
\normLp{2}{\grad\bu^*}
\normLp{2}{\grad\bw} \Bigg)\, dt
\\ &\leq 
C_\beta \epsilon^\alpha 
\normLp{2}{\grad\bu_0}^{\beta}
\| A\bu^\epsilon \|_{L^2 L^2}
\| \bw \|_{L^2 V} \\ &\qquad+
C
\| \bu^* \|_{L^\infty L^2}^\frac{1}{2} 
\| \bu^* \|_{L^\infty V}^\frac{1}{2}
\normLp{2}{\grad\bu_0}
\int_0^T
\normLp{2}{\grad\bw}\, dt \\ &\qquad\quad+
\| \bu \|_{L^\infty L^2}^\frac{1}{2} 
\| \bu \|_{L^\infty V}^\frac{1}{2} 
\| \bu^* \|_{L^\infty L^2}
\int_0^T
\normLp{2}{\grad\bw} \, dt,
\end{align*}
where the three terms vanish as $\epsilon \to 0^+$ as a consequence of \eqref{lem_ineq2} and \eqref{strong_eps_conv}.
Hence, sending $\epsilon\rightarrow0^+$, and choosing $\bw\in C^\infty_c((0,T),V)$ we obtain
\begin{align}\label{NSE_fun_form}
     \dt\bu +  \nu A\bu + B(\bu, \bu) = \bbf
\end{align}
holding in the distributional sense in time with values in $V$, i.e., in the sense of $\mathcal{D}'((0,T),V')$.  But then, as in, e.g., \cite{Temam_2001_Th_Num},
Chapter 3, Lemma 1.1, 
 since the other terms in \eqref{NSE_fun_form} are in $L^2H$, it holds that $\dt\bu\in L^2H$, and moreover, \eqref{NSE_fun_form} holds in the sense of $\dt\bu\in L^2H$.  A standard argument (see, e.g., \cite{Constantin_Foias_1988,Temam_2001_Th_Num}) shows that the initial data is satisfied in the sense of $C([0,T];V)$.  That is, $\bu$ is a strong solution to the Navier-Stokes equations.

 \section{An Energy Equality for Weak Solutions}\label{sec_energy_equality}
In this section we focus only on the calmed rotational Navier-Stokes equations \eqref{cNSE_vor}. We also assume that $\bze$ satisfies condition (\ref{zeta_cond_parll}) of Definition \ref{zeta_def}, so that 
$\lp \lp \grad\times \bu \rp\times\bze(\bu) \rp \cdot \bu = 0$
in the $L^2$-sense thanks to \eqref{cross_prod_ortho}. 
\subsection{Proof of Theorem \ref{thm_energy_identity}}
Suppose $\bbf \in L^2(0,T;V')$.  Let $\bu$ be a weak solution to calmed rNSE as in Definition \ref{cNSE_weak}, with the nonlinearity given by $B(\bu,\bv) = P_\sigma\lp \lp \grad\times \bv \rp \times \bu \rp$. Taking the action of the equation in $V'$ with $\bu$ and using the Lions-Magenes Lemma\footnote{As is well-known, the Lions-Magenes Lemma is \textit{not} known to apply in the setting of weak solutions to the 3D NSE, since for those solutions, it is only known that $\dt\bu\in L^{4/3}(0,T;V')$, preventing a proof of an energy equality for weak solutions of the 3D NSE.  This seems to be an important distinction of system \eqref{cNSE_vor} from the $3$D NSE.} and the fact that $\dt\bu\in L^2(0,T;V')$, we obtain
\begin{align*}
    \frac{1}{2}\frac{d}{dt}\normLp{2}{\bu}^2 + \nu \normLp{2}{\grad\bu}^2 = \left\langle \bbf, \bu \right\rangle.
\end{align*}
 Integrating in time and using the fact that $\bu\in C([0,T];H)$, we find that, for any $t>0$,
\begin{align*}
\normLp{2}{\bu(t)}^2 + 2\nu\int_0^t \normLp{2}{\grad\bu(s)}^2\,ds = \normLp{2}{\bu_0}^2 +2\int_0^t\left\langle \bbf(s), \bu(s) \right\rangle\,ds.
\end{align*}
Therefore equations \eqref{energy_eq} and \eqref{energy_eq_integrated} are valid, proving Theorem \ref{thm_energy_identity}. \qedsymbol

\begin{remark}
Let us briefly compare system \eqref{cNSE_vor} with the 3D NSE.  For the 3D NSE, it was shown in \cite{Buckmaster_Vicol_weakNonuni_2019, Luo_Titi_weakNonuni_2020}
that weak solutions are non-unique, but it is currently a major open problem to show whether weak solutions that satisfy the energy inequality (called Leray-Hopf solutions) are unique.  In contrast, weak solutions of \eqref{cNSE_vor} are not known to be unique, but we have just shown that they satisfy not only an energy inequality, but an energy equality.  Hence, \eqref{cNSE_vor} is an example of a system which is very similar to the 3D NSE (especially given the convergence in Theorem \ref{thm_cNSE_conv}), where an energy equality is known for weak solutions but for which a proof of uniqueness of weak solutions remains elusive.
\end{remark}

\begin{remark}
It may be worth studying analogues of so-called ``suitable weak solutions,'' proposed for the 3D NSE in \cite{Duchon_Robert_2000}, for which a local energy inequality holds. This would be especially interesting for system \eqref{cNSE_vor} under assumption (\ref{zeta_cond_parll}) in Definition \ref{zeta_def} due to the point-wise vanishing of the nonlinear term.  However, we postpone this study to a future work.
\end{remark}
 \section{A Global Attractor}\label{sec_attractor}
From the existence of the energy identity \eqref{energy_eq} we are able to prove the existence of a global attractor for the dynamical system generated by solutions of calmed rNSE \eqref{cNSE_vor}. 
 \subsection{Proof of Theorem \ref{thm_cNSE_vort_Glatt}}
 Consider again the calmed rotational Navier-Stokes equations \eqref{cNSE_vor}, under conditions \ref{zeta_cond_Lip}, \ref{zeta_cond_bdd}, \ref{zeta_cond_conv}, and \ref{zeta_cond_parll} of Definition \ref{zeta_def}.
Take $\bbf \in H$ to be time-independent, and for a given $R > 0$,  
let $B_R = \{ \bu \in H: \normLp{2}{\bu} \leq R \}$. Now choose $ \bu_0 \in B_R$. 
On the right hand side of \eqref{energy_eq}, we use H\"older's, Poincar\'e's, and Young's inequalities to obtain 
\begin{align}\notag
    \abs{\lp \bbf, \bu \rp} &\leq 
    \frac{1}{2\nu\lambda_1}\normLp{2}{\bbf}^2 + \frac{\nu}{2}\normLp{2}{\grad\bu}^2
\end{align}

We insert the second estimate into the first and rearrange the terms, which yield
\begin{align}\label{cNSE_Glatt1}
\frac{d}{dt}\normLp{2}{\bu}^2 + \nu \normLp{2}{\grad\bu}^2 \leq \frac{1}{\nu\lambda_1}\normLp{2}{\bbf}^2.
\end{align}
We apply Poincar\'e's inequality once more, 
\begin{align} \notag
    \frac{d}{dt}\normLp{2}{\bu}^2 + \nu\lambda_1\normLp{2}{\bu}^2 \leq \frac{1}{\nu\lambda_1}\normLp{2}{\bbf}^2,
\end{align} 
then we apply Gr\"onwall's inequality:
\begin{align} \notag
    \normLp{2}{\bu(t)}^2 & \leq 
    e^{-\nu\lambda_1 t}\normLp{2}{\bu_0}^2 + 
    \frac{1}{\nu\lambda_1}\lp 1 - e^{-\nu\lambda_1 t} \rp\normLp{2}{\bbf}^2 \\ &\leq \notag
    e^{-\nu\lambda_1 t}R^2 + 
    \frac{1}{\nu\lambda_1}\lp 1 - e^{-\nu\lambda_1 t} \rp\normLp{2}{\bbf}^2
\end{align}
We now set 
\[t_0 = \frac{1}{\nu\lambda_1}\ln( 1 + R^2 ), \]
so that 
\[
\max\left\{ e^{-\nu\lambda_1 t}, e^{-\nu\lambda_1 t}R^2 \right\} < 1
\]
for all $ t \geq t_0$. Then we obtain
\begin{align} \label{cNSE_Glatt2}
    \normLp{2}{\bu(t)}^2 < \rho_0
\end{align}
for all $t \geq t_0$, where 
$\rho_0 = 1 + \frac{1}{\nu\lambda_1} \normLp{2}{\bbf}^2$.

If instead we integrate \eqref{cNSE_Glatt1} on the interval\footnote{Here, the ``$1$'' in ``$t-1$'' has dimensions of time.  Instead, one could consider the interval $[t-\tau, t]$, where $\tau=\frac{1}{\nu\lambda_1}$, but we use a unit interval to simplify the presentation.} $[t-1, t]$ for some $t \geq t_0 + 1$, we obtain 
\begin{align}\notag
    \normLp{2}{\bu(t)}^2 + 
    \nu \int_{t-1}^t \normLp{2}{\grad\bu}^2 \leq 
    \normLp{2}{\bu(t-1)}^2 + 
    \frac{1}{\nu\lambda_1}\normLp{2}{\bbf}^2,
\end{align}
from which we deduce, by \eqref{cNSE_Glatt2},
\begin{align}\label{cNSE_Glatt3}
    \int_{t-1}^t \normLp{2}{\grad\bu}^2 ds \leq \rho_1,
\end{align}
where $\rho_1 = \frac{1}{\nu}\rho_0 + \frac{1}{\nu^2\lambda_1}\normLp{2}{\bbf}^2$.
Now, we take the action of \eqref{cNSE_vor_mo} with $-\lap \bu$, and use the Lions-Magenes Lemma to obtain 
\begin{align} \notag
    & \quad
    \frac{1}{2}\frac{d}{dt}\normLp{2}{\grad \bu}^2 + 
    \nu\normLp{2}{\lap\bu}^2 \\ &= \notag
    \lp \lp \grad\times \bu \rp \times \bze(\bu), \lap \bu \rp - 
    \lp \bbf, \lap\bu \rp
    \\ & \leq \notag
    C_{\nu}\normLp{\infty}{\bze} \normLp{2}{\grad\bu}^2 + 
    C_\nu \normLp{2}{\bbf}^2 + 
    \frac{\nu}{2}\normLp{2}{\lap \bu}^2
\end{align}
We then rearrange the inequality above which yields 
\begin{align} \label{cNSE_Glatt4}
    \frac{d}{dt}\normLp{2}{\grad\bu}^2 + 
    \nu \normLp{2}{\lap\bu}^2 \leq 
    C_{\nu}\normLp{\infty}{\bze}  \normLp{2}{\grad\bu}^2 + 
    C_\nu \normLp{2}{\bbf}^2.
\end{align}
Now, select $s$ and $t$ such that $t > t_0 +1 $ and $t-1 < s < t$. We remove the viscous term from the left-hand side, then integrate \eqref{cNSE_Glatt4} on the interval $[s,t]$ and apply \eqref{cNSE_Glatt3} to obtain
\begin{align} \label{cNSE_Glatt5}
    \normLp{2}{\grad\bu(t)}^2 
    \leq 
    \normLp{2}{\grad\bu(s)}^2 +
    C_\nu \normLp{2}{\bbf}^2 + 
    C_{\nu}\normLp{\infty}{\bze} \rho_1.
\end{align}
Integrating once more in $s$ on the interval $[t-1, t]$ and again using \eqref{cNSE_Glatt3}, it follows that, for $ t > t_0 + 1$,
\begin{align}\label{cNSE_Glatt6}
    \normLp{2}{\grad\bu(t)}^2 \leq 
    \rho_2,
\end{align}
where $ \rho_2 = \rho_1 + C_\nu \normLp{2}{\bbf}^2 + C_{\nu}\normLp{\infty}{\bze} \rho_1$. From this inequality we deduce that $B_{\rho_2} = \left\{ \bu \in H: \normLp{2}{\bu} \leq \rho_2\right\}$ is bounded in $V$. Since $V$ is compactly embedded in $H$, we deduce that $B_{\rho_2}$ is a compact absorbing set in $H$. 
Applying Theorem $10.5$ of \cite{Robinson_2001}, we conclude that there exists a global attractor in $H$. \qedsymbol

\begin{remark}
    Observe that the upper bounds in \eqref{cNSE_Glatt4}, \eqref{cNSE_Glatt5}, \eqref{cNSE_Glatt6} each depend on $\normLp{\infty}{\bze}$. Therefore these upper bounds do not remain valid as $\epsilon \to 0^+$, since $\displaystyle\lim_{\epsilon \to 0^+ }\normLp{\infty}{\bze} = \infty$.
\end{remark}

 \section{Conclusions}

 We proposed two modifications of the $3$D Navier-Stokes equations: one involved a modification to the advective velocity term of Navier-Stokes (with kinematic pressure), which we refer to as `calmed Navier-Stokes,' and the other involves a modification to the Lamb vector of Navier-Stokes (with Bernoulli pressure), which we term `calmed rotational Navier-Stokes.' We have successfully demonstrated the existence of weak solutions for both of these calmed systems, although the question of whether these solutions are unique remains open. Furthermore, we have established the global well-posedness for strong solutions in both cases. Moreover, we demonstrate that calmed strong solutions do converge to strong solutions of the Navier-Stokes equations on sufficiently small time intervals, provided suitable conditions on the calming function and suitable regularity of the solution to Navier-Stokes.  

In the context of the calmed rotational Navier-Stokes Equations (for suitable calming functions), we also establish the existence of an energy identity and the presence of a compact global attractor within the function space $H$.

\section*{Appendix}
\subsection*{Proof of Proposition \ref{zeta_cond_prop}}
For $\bze_1$ and $\bze_2$, the proof follows from direct computation. For $\bze_3$, we first note that $\bze_3(\bx)$ is increasing towards $\lp \frac{\pi}{2\epsilon}, \frac{\pi}{2\epsilon}, \frac{\pi}{2\epsilon} \rp^T$ as $x_1, x_2, x_3$ get large. It follows that $\normLp{\infty}{ \bze_3} = \frac{\sqrt{3}\pi}{2\epsilon}$. For the estimate on pointwise convergence,
    we begin by noting that, for $\bx = \lp x_1, x_2, x_3 \rp^T$,
    \[
    \abs{\frac{d}{dx_i}\lp \frac{1}{\epsilon}\arctan(\epsilon x_i) - x_i \rp} \leq 
    \epsilon^2 x_i^2
    \]
    for $i = 1,2,3$.
    Thus 
    \[
    \abs{\frac{1}{\epsilon} \arctan(\epsilon x_i) - x_i} \leq 
    \frac{1}{3} \epsilon^2 x_i^3, 
    \]
    and since $\abs{\lp x_1^3, x_2^3, x_3^3 \rp^T} \leq 3 \abs{\bx}^3$, we have
    \[
    \abs{\bze_3(\bx) - \bx} \leq \epsilon^2 \bx^3.
    \]
    For $\bze_4$, determining the upper bound is trivial. Following a similar procedure as above for obtaining our pointwise convergence estimate, one checks that for all $r\geq 0$,
    \[
    \abs{\frac{d}{dr}\lp q^\epsilon(r) - r\rp} \leq 2\epsilon r
    \]
    for $q^\epsilon(r)$ defined in $\eqref{calm4}$.
    It follows that 
    $\abs{q^\epsilon(r) - r } \leq \epsilon r^2$, hence 
    \[\abs{\bze_4(\bx) - \bx} \leq \epsilon\abs{\bx}^2.\]
    \qedsymbol
 \section*{Acknowledgments}
 \noindent
The research of A.L. and M.E. was supported in part by NSF Grants CMMI-1953346 and DMS-2206741.  The research of A.L. was also supported in part USGS  grant G23AS00157 number GRANT13798170. J.W. was partially supported by NSF Grants DMS 2104682 and DMS 2309748.
 
\begin{scriptsize}

\begin{thebibliography}{99}

\bibitem{Buckmaster_Vicol_weakNonuni_2019}
T.~Buckmaster and V.~Vicol.
\newblock Nonuniqueness of weak solutions to the {N}avier--{S}tokes equation.
\newblock {\em Ann. of Math}, 189(1):101--144, 2019.

\bibitem{Cao_Holm_Titi_2005_Jturb}
C.~Cao, D.~D. Holm, and E.~S. Titi.
\newblock On the {C}lark--$\alpha$ model of turbulence: global regularity and
  long-time dynamics.
\newblock {\em J. Turbulence}, 6:1--11, 2005.

\bibitem{Cao_Titi_2009}
Y.~Cao and E.~S. Titi.
\newblock On the rate of convergence of the two-dimensional {$\alpha$}-models
  of turbulence to the {N}avier--{S}tokes equations.
\newblock {\em Numer. Funct. Anal. Optim.}, 30(11-12):1231--1271, 2009.

\bibitem{Caraballo_Kloeden_Real_2008_DCDS}
T.~Caraballo, P.~E. Kloeden, and J.~Real.
\newblock Invariant measures and statistical solutions of the globally modified
  {N}avier--{S}tokes equations.
\newblock {\em Discrete Contin. Dyn. Syst. Ser. B}, 10(4):761--781, 2008.

\bibitem{Caraballo_Real_Kloeden_2006_ANS}
T.~Caraballo, J.~Real, and P.~E. Kloeden.
\newblock Unique strong solutions and {V}-attractors of a three dimensional
  system of globally modified {N}avier-{S}tokes equations.
\newblock {\em Adv. Nonlinear Stud.}, 6(3):411--436, 2006.

\bibitem{Chai_Duan_2019_attractor_GMNSE_fractional}
X.~Chai and Y.~Duan.
\newblock Finite-dimensional global attractor for globally modified
  {N}avier--{S}tokes equations with fractional dissipation.
\newblock {\em Ann. Polon. Math.}, 122(2):101--128, 2019.

\bibitem{Chen_Foias_Holm_Olson_Titi_Wynne_1999}
S.~Chen, C.~Foias, D.~D. Holm, E.~Olson, E.~S. Titi, and S.~Wynne.
\newblock The {C}amassa-{H}olm equations and turbulence.
\newblock {\em Phys. D}, 133(1-4):49--65, 1999.
\newblock Predictability: quantifying uncertainty in models of complex
  phenomena (Los Alamos, NM, 1998).

\bibitem{Cheskidov_Holm_Olson_Titi_2005}
A.~Cheskidov, D.~D. Holm, E.~Olson, and E.~S. Titi.
\newblock On a {L}eray-{$\alpha$} model of turbulence.
\newblock {\em Proc. R. Soc. Lond. Ser. A Math. Phys. Eng. Sci.},
  461(2055):629--649, 2005.

\bibitem{Constantin_Foias_1988}
P.~Constantin and C.~Foias.
\newblock {\em Navier--{S}tokes {E}quations}.
\newblock Chicago Lectures in Mathematics. University of Chicago Press,
  Chicago, IL, 1988.

\bibitem{dauge1989stationaryStokes}
M.~Dauge.
\newblock Stationary stokes and navier--stokes systems on two-or
  three-dimensional domains with corners. part i. linearized equations.
\newblock {\em SIAM Journal on Mathematical Analysis}, 20(1):74--97, 1989.

\bibitem{Deugou_Medjo_2018_CPAA}
G.~Deugou\'{e} and T.~Tachim~Medjo.
\newblock The stochastic 3{D} globally modified {N}avier--{S}tokes equations:
  existence, uniqueness and asymptotic behavior.
\newblock {\em Commun. Pure Appl. Anal.}, 17(6):2593--2621, 2018.

\bibitem{Duchon_Robert_2000}
J.~Duchon and R.~Robert.
\newblock Inertial energy dissipation for weak solutions of incompressible
  {E}uler and {N}avier--{S}tokes equations.
\newblock {\em Nonlinearity}, 13(1):249--255, 2000.

\bibitem{Enlow_Larios_Wu_2023_calming}
M.~Enlow, A.~Larios, and J.~Wu.
\newblock Algebraic calming for the 2{D} {K}uramoto--{S}ivashinsky equations.
\newblock {\em (submitted) arXiv:2304.10493}, 2023.

\bibitem{Farhat_Lunasin_Titi_2018_Leray_AOT}
A.~Farhat, E.~Lunasin, and E.~S. Titi.
\newblock A data assimilation algorithm: the paradigm of the 3{D}
  {L}eray-{$\alpha$} model of turbulence.
\newblock {\em Partial differential equations arising from physics and
  geometry}, 450:253--273, 2019.

\bibitem{Guermond_Salgado_2011_Stokes}
J.-L. Guermond and A.~Salgado.
\newblock A note on the {S}tokes operator and its powers.
\newblock {\em J Appl Math Comput}, 36:241--250, 2011.

\bibitem{Hecht_Holm_Petersen_Wingate_2008_analysis}
M.~W. Hecht, D.~D. Holm, M.~R. Petersen, and B.~A. Wingate.
\newblock The {LANS}-{$\alpha$} and {L}eray turbulence parameterizations in
  primitive equation ocean modeling.
\newblock {\em J. Phys. A}, 41(34):344009, 23, 2008.

\bibitem{Ilyin_Lunasin_Titi_2006}
A.~A. Ilyin, E.~M. Lunasin, and E.~S. Titi.
\newblock A modified-{L}eray-{$\alpha$} subgrid scale model of turbulence.
\newblock {\em Nonlinearity}, 19(4):879--897, 2006.

\bibitem{Kloeden_Langa_Real_2007_CPAA}
P.~E. Kloeden, J.~A. Langa, and J.~Real.
\newblock Pullback {$V$}-attractors of the 3-dimensional globally modified
  {N}avier--{S}tokes equations.
\newblock {\em Commun. Pure Appl. Anal.}, 6(4):937--955, 2007.

\bibitem{Kloeden_Rubio_Real_2009_CPAA}
P.~E. Kloeden, P.~Mar\'{\i}n-Rubio, and J.~Real.
\newblock Equivalence of invariant measures and stationary statistical
  solutions for the autonomous globally modified {N}avier--{S}tokes equations.
\newblock {\em Commun. Pure Appl. Anal.}, 8(3):785--802, 2009.

\bibitem{Layton_Rebholz_2013_Voigt}
W.~J. Layton and L.~G. Rebholz.
\newblock On relaxation times in the {N}avier--{S}tokes--{V}oigt model.
\newblock {\em Int. J. Comput. Fluid Dyn.}, 27(3):184--187, 2013.

\bibitem{Leray_1934essai}
J.~Leray.
\newblock Essai sur le mouvement d'un fluide visqueux emplissant l'espace.
\newblock {\em Acta Math.}, 63(1):193--248, 1934.

\bibitem{Luo_Titi_weakNonuni_2020}
T.~Luo and E.~S. Titi.
\newblock Non-uniqueness of weak solutions to hyperviscous {N}avier--{S}tokes
  equations: on sharpness of {J}.-{L}. {L}ions exponent.
\newblock {\em Calculus of variations and partial differential equations},
  59(3), 2020.

\bibitem{Rubio_Duran_Real_2011_DCDS}
P.~Mar\'{\i}n-Rubio, A.~M. M\'{a}rquez-Dur\'{a}n, and J.~Real.
\newblock Pullback attractors for globally modified {N}avier--{S}tokes
  equations with infinite delays.
\newblock {\em Discrete Contin. Dyn. Syst.}, 31(3):779--796, 2011.

\bibitem{Robinson_2001}
J.~C. Robinson.
\newblock {\em Infinite-{D}imensional {D}ynamical {S}ystems}.
\newblock Cambridge Texts in Applied Mathematics. Cambridge University Press,
  Cambridge, 2001.
\newblock An {I}ntroduction to {D}issipative {P}arabolic {PDE}s and the
  {T}heory of {G}lobal {A}ttractors.

\bibitem{Romito_2009_ANS}
M.~Romito.
\newblock The uniqueness of weak solutions of the globally modified
  {N}avier--{S}tokes equations.
\newblock {\em Adv. Nonlinear Stud.}, 9(2):425--427, 2009.

\bibitem{Tao_2006}
T.~Tao.
\newblock {\em Nonlinear Dispersive Equations : Local and Global Analysis}.
\newblock Conference Board of the Mathematical Sciences regional conference
  series in mathematics, no. 106. American Mathematical Society, Providence,
  R.I, 2006.

\bibitem{Temam_2001_Th_Num}
R.~Temam.
\newblock {\em Navier--{S}tokes {E}quations: {T}heory and {N}umerical
  {A}nalysis}.
\newblock AMS Chelsea Publishing, Providence, RI, 2001.
\newblock Theory and numerical analysis, Reprint of the 1984 edition.

\bibitem{Wang_1993_deRham}
X.~Wang.
\newblock A remark on the characterization of the gradient of a distribution.
\newblock {\em Applicable Analysis}, 51(1-4):35--40, 1993.

\bibitem{Yamazaki_2012_Leray_alpha}
K.~Yamazaki.
\newblock On the global regularity of generalized {L}eray-alpha type models.
\newblock {\em Nonlinear Anal.}, 75(2):503--515, 2012.

\bibitem{Yoshida_1984nonlinear}
Z.~Yoshida and Y.~Giga.
\newblock A nonlinear semigroup approach to the {N}avier--{S}tokes system.
\newblock {\em Communications in partial differential equations},
  9(3):215--230, 1984.

\bibitem{Zhang_2009_tamed_NSE}
X.~Zhang.
\newblock A tamed 3{D} {N}avier--{S}tokes equation in uniform {$C^2$}-domains.
\newblock {\em Nonlinear Anal.}, 71(7-8):3093--3112, 2009.

\bibitem{Zhao_Yang_2017_GMNSE_pullback}
C.~Zhao and L.~Yang.
\newblock Pullback attractors and invariant measures for the non-autonomous
  globally modified {N}avier--{S}tokes equations.
\newblock {\em Commun. Math. Sci.}, 15(6):1565--1580, 2017.

\end{thebibliography}

\end{scriptsize}

\end{document}